\newlength{\defbaselineskip}
\newcommand{\setlinespacing}[1]%
           {\setlength{\baselineskip}{#1 \defbaselineskip}}
\numberwithin{equation}{section}
\def\Xint#1{\mathchoice
	{\XXint\displaystyle\textstyle{#1}}%
	{\XXint\textstyle\scriptstyle{#1}}%
	{\XXint\scriptstyle\scriptscriptstyle{#1}}%
	{\XXint\scriptscriptstyle\scriptscriptstyle{#1}}%
	\!\int}
\def\XXint#1#2#3{{\setbox0=\hbox{$#1{#2#3}{\int}$ }
		\vcenter{\hbox{$#2#3$ }}\kern-.6\wd0}}
\def\dashint{\Xint-}
\newtheorem{thm}{Theorem}[section]
\newtheorem{lem}[thm]{Lemma}
\theoremstyle{definition}
\newtheorem{defn}[thm]{Definition}
\theoremstyle{remark}
\newtheorem{rem}[thm]{Remark}
\numberwithin{equation}{section}
\begin{document}

\title[weighted Hessian estimates]
{Weighted Hessian estimates in Orlicz spaces for nondivergence elliptic operators with certain potentials}

\author{Mikyoung Lee and Yoonjung Lee}

\thanks{
M. Lee was supported by the
National Research Foundation of Korea by the Korean Government (NRF-2021R1A4A1032418).
Y. Lee was supported by the
National Research Foundation of Korea by the Ministry of Education (NRF-2022R1I1A1A01068481)}

\subjclass[2010]{Primary: 35J10; Secondary: 35B65,46E30}
\keywords{Regularity; Elliptic equation: Schr\"odinger operator; Muckenhoupt weight; Orlicz space}

\address{Mikyoung Lee, Department of Mathematics and Institute of Mathematical Science, Pusan national University, Busan 46241, Republic of Korea}
\email{mikyounglee@pusan.ac.kr}

\address{Yoonjung Lee, Department of Mathematics,
Yonsei University, Seoul 03722, Republic of Korea}
\email{yjglee@yonsei.ac.kr}

\begin{abstract}
We prove interior weighted Hessian estimates in Orlicz spaces for nondivergence type elliptic equations with a lower order term which involves a nonnegative potential satisfying a reverse H\"older type condition. 
\end{abstract}

\maketitle
	
\section{Introduction}

On the $L^p$ regularity theory, extensive research activities have been conducted for nondivergence elliptic equations with varying assumptions on coefficients, 
with numerous studies in for instance \cite{BBHV, CFL91, CFL93, CZ52, D12, GT, KK07, T, V, W} and references therein.
Recent studies on such regularity problem have been extending to Orlicz spaces
which were first introduced in \cite{O} as a generalization of $L^p$ spaces, see for instance \cite{BL15, BLO17, BOPS, WYZJ09,Ya16}.    
This paper investigates the weighted $L^p$ regularity problem in the setting of Orlicz spaces for nondivergence elliptic equations that include a lower order term of the form
\begin{equation}\label{second_order_ell}
-a_{ij} D_{ij} u+Vu=f \ \ \text{ in } \Omega.
\end{equation}
Here,
$\Omega$ is an open and bounded domain in $\mathbb{R}^n$ with $n\ge3$,
and the coefficient matrix $\textbf{A}=(a_{ij})$ is supposed  to be symmetric ($a_{ij}=a_{ji}$ for $i,j=1,...,n$), satisfy the uniform ellipticity condition
\[
\mu^{-1}|\xi|^2 \leq \langle \textbf{A}(x) \xi_i, \xi_j \rangle \leq \mu |\xi|^2 \qquad \text{for}  \quad \forall \xi \in \mathbb{R}^n,\   x\in \Omega 
\]
for some constant $\mu \ge 1$ and belong to
 the space of bounded mean oscillation (BMO)
functions with small BMO semi-norms (see  Definition~\ref{smallBMOcondition}).

The nonnegative potential $V$ is assumed to be in the class $ \mathcal{RH}_q$ for some $q \ge n/2$, whose definition is that $V \in L_{loc}^q$ and 
there exists a constant $c_q>0$ such that the \textit{reverse H\"older inequality} 
$$
\left( \dashint_{B}V(x)^q dx \right)^{1/q} \leq c_q\, \dashint_{B} V(x)dx
$$ 
holds for every ball $B$ in $\mathbb{R}^n$.
(We refer to \cite{S} for the definition of the class $\mathcal{RH}_{\infty}$.)
A typical example in the class $\mathcal{RH}_q$ is  $V(x)=|x|^{\alpha}$ for $\alpha>-n/q$.
Moreover, it is not difficult to check that the class $\mathcal{RH}_q$ is invariant under scaling, translation and scalar multiplication.
Indeed, it guarantees that
\begin{equation}\label{invariant}
V \in \mathcal{RH}_q \quad \Longleftrightarrow  \quad r^2 V(r\cdot+y) \in \mathcal{RH}_q
\end{equation}
for any $r>0$, $y\in \mathbb{R}^n$ and $1\leq q \leq\infty$. 
Another property of this class is that $\mathcal{RH}_{q_1} \subseteq \mathcal{RH}_{q_2}$ when $q_2 \leq q_1 $ by the H\"older inequality. 
Also, there exists $\sigma \in (0,1)$ depending on $q$ and $n$ such that
$ \mathcal{RH}_q \subseteq \mathcal{RH}_{q+\sigma}$ as the self-improving property
(see \cite[Theorem~9.3.3]{Gr09} for more in-depth details).

In the case $V=0$, the $W^{2,p}$ regularity estimates for the equation \eqref{second_order_ell} have been studied over several decades in for instance \cite{CZ52, W} for the case $a_{ij}=\delta_{ij}$, 
\cite{GT} for continuous coefficients $a_{ij} \in C(\overline{\Omega})$ and 
\cite{CFL91,CFL93} for discontinuous coefficients $a_{ij}$ belonging to VMO (vanishing mean oscillation) space.
In the next breath, many researchers intend to establish such regularity results in the setting of more generalized spaces than $L^p$ spaces. 
In particular, regularity estimates in the Orlicz space were derived in \cite{WYZJ09} for the Poisson equation when $a_{ij}=\delta_{ij}$. 
While, in \cite{BL}, weighted $L^p$ regularity was taken into account with Muckenhoupt weight $w\in A_p$ for \eqref{second_order_ell}  without a lower order term and with small BMO coefficients (see also \cite{Ya15}). 
Considering local Muckenhoupt weight, 
a little different form of weighted $W^{2, p}$ estimates was proved in \cite{L15}.  In addition,
we refer to \cite{BLO15} for exponent variable spaces $L^{p(\cdot)}$
and \cite{HO15} for generalized Orlicz spaces, also known as \textit{Musielak-Orlicz spaces}.

The $L^p$-type regularity for the equation \eqref{second_order_ell} in general case $V\in \mathcal{RH}_q$ with some $q\ge n/2$ can be traced back to the work of Shen \cite{Sh95} for $a_{ij}=\delta_{ij}$. 
More precisely,
he proved the global $L^p$ estimates
\begin{equation}\label{Lp regularity_V}
\|D^2 u\|_{L^p(\mathbb{R}^n)} +\|Vu\|_{L^p(\mathbb{R}^n)} \leq c\|f\|_{L^p(\mathbb{R}^n)}
\end{equation}
for $1<p\leq q$ and any solution $u$ of $(-\Delta+V)u=f$, 
by using estimates for the integral representation of the fundamental solution for the operator $-\Delta+V$. 
We also note that before his work
there were similar results of \eqref{Lp regularity_V} in \cite{T} for $V=|x|^2$ with $p=2$ 
and \cite{Z} for a nonnegative polynomial $V$.
When the coefficients $a_{ij}$ belong to VMO space,
Vitanza \cite{Vi93} showed  the global $W^{2,p}$ estimates for
any solution $u \in W^{2, p}\cap W_{0}^{1, p}(\Omega)$ of \eqref{second_order_ell} 
 when $V\in L^q$ with $q>n/2$ for $1<p \leq n/2$ and $q=p$ for $p>n/2$. We also refer to \cite{V} for the related work.
After then, for $V\in \mathcal{RH}_q$ with some $q \ge n/2$,
the $W^{2, p}$ regularity estimates were established in \cite{BBHV} 
with $1<p\leq q$. 
In the weighted setting, Yao \cite{Ya14} studied weighted type of \eqref{Lp regularity_V} for Schr\"odinger operator $-\Delta+V$ under $V\in \mathcal{RH}_{\infty}$ including any nonnegative polynomial $V$.
 We also refer to \cite{CVV} for some weighted type $W^{2, p}$ estimates with local Muckenhoupt weight. 
On the other hand, Orlicz-type regularity result for the nondivergence elliptic equations with a lower order term is not even known so far.

Our aim in this paper is to establish the interior Hessian estimates in the spaces $L_w^{\Phi}$ for \eqref{second_order_ell} under the assumptions that the coefficients $a_{ij}$ satisfy a small BMO condition and the potential $V \in \mathcal{RH}_q$ for some $q \ge n/2$.
Here, the weighted Orlicz space $L_{w}^{\Phi}$ is equipped with the $N$-function $\Phi$ satisfying a $\nabla_2 \cap \Delta_2$-condition and the Muckenhoupt weight $w \in i(\Phi)$. 
It is worth noting that
the weighted Orlicz space $L_{w}^{\Phi}$ with $w\equiv 1$ corresponds to the classical Orlicz space $L^{\Phi}$, which implies that our result also covers the case of Orlicz regularity without any weight. The details of these concepts are given in Section \ref{se2}.
Also, such weighted type regularity was studied in $L^{p}$ spaces for the special case $a_{ij}=\delta_{ij}$ in \cite{Ya14} under the assumption $\mathcal{RH}_{\infty}$ as mentioned before. 
We would like to mention that the condition on $V$ of our main result is more relaxed than that in \cite{Ya14}, from
the relation $\mathcal{RH}_{\infty} \subseteq \mathcal{RH}_q$ for $q \ge n/2$.

The paper is organized as follows. 
In Section \ref{se2}, we introduce some basic concepts of weighted Orlicz spaces and their properties and then state our main result, Theorem \ref{mainresult}.  
Our approach in this paper is based on the maximal function method together with modified Vitali covering lemma that was developed in \cite{C89} and improved in \cite{W}.  
In order to adapt it, it is vital to obtain decay estimates of the upper-level set for maximal function of $D^2u$.
Such decay estimates are established in Section \ref{se4}.
Section \ref{se5} is devoted to the proof of Theorem \ref{mainresult}.

\section{Preliminary and Main result}\label{se2}
Let us start with some standard notation. 
For $x\in \mathbb{R}^n$, let $B_r(x):= \{y\in \mathbb{R}^n\,:\, |x-y|<r\}$ be denoted as a ball centered at $x$ with radius $r>0$.
For simplicity of notation, we set $B_r =B_r(0)$.
Let $\Omega$ be an open and bounded domain in $\mathbb{R}^n$.
We use the notation
$$
\overline{f}_{\Omega}=\dashint_{\Omega} f(x) \, dx = \frac{1}{|\Omega|}\int_{\Omega} f(x) dx  $$ 
where $|\Omega|=\int_{\Omega} dx$ is a standard Euclidean measure of $\Omega$. 
The Sobolev space $W^{2,p}(\Omega)$ consists of functions $u \in L^{p}(\Omega)$ such that $Du, D^{2}u \in L^{p}(\Omega)$.
The space $W_0^{1, p}(\Omega)$ is defined for $1<p<\infty$ as a subset of Sobolev spaces $W^{1, p}(\Omega)$ consisting all functions vanishing zero on the boundary $\partial \Omega$.

We start with some definitions and useful properties of weighted Orlicz spaces.
For more details, we refer to the survey \cite{GP77, KK91, KR61}. 

\subsection{N-function}\label{subsec2.1}
Let us first introduce \textit{N-function} $\Phi:[0, \infty) \rightarrow [0, \infty)$ which is
convex, i.e. 
$$
\Phi(\lambda \rho) \leq \lambda\Phi(\rho)\ \text{for any } \lambda \in [0,1],
$$
continuous, increasing such that
$\Phi(0)=0$, $\Phi(\rho)>0$ for all $\rho>0$
and 
$$
\lim_{\rho \rightarrow 0^+} \frac{\Phi(\rho)}{\rho} = \lim_{\rho \rightarrow \infty} \frac{\rho}{\Phi (\rho)} = 0.
$$

In this paper, we shall additionally assume $\Delta_2 \cap \nabla_2$-condition on N-function $\Phi$ (i.e., $\Phi \in \Delta_2 \cap \nabla_2$) which means that 
\begin{enumerate}
	\item[(1)] (\textit{$\Delta_2$-condition})
	there exists a constant $\tau_1 >1$
	such that
	$$\Phi(2\rho) \leq \tau_1 \Phi(\rho) \quad \text{for all} \quad \rho>0 $$
	and
	\item[(2)] (\textit{$\nabla_2$-condition})
	there exists $\tau_2 >1$ such that 
	$$\Phi(\rho) \leq \frac{1}{2\tau_2}\Phi(\tau_{2}\rho) \quad \text{for all} \quad \rho>0. $$ 
\end{enumerate}

In particular for $\Phi \in \Delta_2$, 
there exists $\tau=\tau(\lambda)>0$ such that $\Phi(\lambda \rho) \leq \tau \Phi(\rho) $ for all $\rho, \lambda > 0$ thanks to the convexity of $\Phi$. 
We then define 
\[
h_{\Phi} (\lambda) = \sup_{\rho >0}\frac{\Phi(\lambda \rho)}{\Phi(\rho)} \quad \text{ for } \lambda>0
\]
and \textit{lower index} of $\Phi$ is denoted by  
\[
i(\Phi) = \lim_{\lambda \rightarrow 0^+} \frac{\log(h_{\Phi}(\lambda))}{\log\lambda} = \sup_{0<\lambda<1} \frac{\log(h_{\Phi}(\lambda))}{\log\lambda}
\]
which is analogue to the role of a lower degree of homogeneity of $\Phi$.

Meanwhile, for $\Phi \in \Delta_2$ there exist $q_1, q_2 \in [1, \infty)$ with $1<q_1 \leq q_2 <\infty$ such that
\begin{equation} \label{ineq_Young}
c^{-1} \min\{\lambda^{q_1}, \lambda^{q_2}\} \leq \frac{\Phi(\lambda \rho)}{\Phi(\rho)} \leq  c\max\{\lambda^{q_1}, \lambda^{q_2}\} \quad \text{for } \lambda, \rho \geq 0
\end{equation}
where $c$ is independent of $\lambda$ and $\rho$ (see \cite{GP77, FK97}).
For the values of $q_1$ satisfying \eqref{ineq_Young} with $\lambda \geq 1 $, 
we should notice that $i(\Phi)$ is the same as the supremum of those $q_1$, namely $q_1 \leq i(\Phi)$. 
In a simple case $\Phi(\rho)=\rho^q$ with $\rho>0$, we see $i(\Phi)=q$ as a note.

\subsection{Muckenhoupt weight}
A Muckenhoupt weight $w\in A_p$, for $1<p<\infty$, 
which is a locally integrable and positive function $w$ on $\mathbb{R}^n$ satisfying
$$
[w]_{p} :=\sup_{B \subset \mathbb{R}^n} \left( \dashint_B w(x) dx \right) \left( \dashint_B w(x)^{-\frac{1}{p-1}} dx \right)^{p-1} < \infty.
$$  
We sometimes write 
$$w(\Omega)=\int_{\Omega} w(x) dx.$$

The following properties for the Muckenhoupt weights $w\in A_p$ are well-known, see \cite{Gr09} for more details with their proofs.
\begin{lem}\label{property_A}
	Let $w\in A_p$ with $1<p<\infty$. 
	The following properties hold:
	\begin{itemize}
		\item[(1)] For $p_1\leq p_2$
		\begin{equation}\label{inclusion_mucken}
		A_{p_1} \subseteq A_{p_2}.
		\end{equation}
		\item[(2)]
		There exists a small constant $\tilde \varepsilon  >0$ depending on $n,p,$ and $[w]_p$ such that 
		\begin{equation}\label{self-improving}
		A_p \subset A_{p-\tilde \varepsilon}.		
		\end{equation}
		\item[(3)] For measurable sets $C, D$ with $C \subset D \subset \mathbb{R}^n$,
		there exists $\sigma  \in (0,1)$ depending on $n,p,$ and $[w]_p$ such that
		\begin{equation}\label{measure comparison}
		[w]_{p}^{-1}\left(\frac{|C|}{|D|}\right)^p\leq  \frac{w(C)}{w(D)} \leq c \left(\frac{|C|}{|D|} \right)^{\sigma}
		\end{equation}
		where a constant $c>0$ depends only on $n$ and $p$. 
	\end{itemize}
\end{lem}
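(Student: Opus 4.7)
The plan is to handle the three claims in sequence, using three distinct tools of Muckenhoupt theory. For (1), I would deduce the inclusion $A_{p_1}\subseteq A_{p_2}$ from the monotonicity of $L^r$-means: writing $\alpha(p)=-1/(p-1)$, which is negative and strictly increasing on $(1,\infty)$, the power-mean inequality gives that $p\mapsto(\dashint_B w^{\alpha(p)})^{1/\alpha(p)}$ is nondecreasing, and rearranging (carefully tracking the sign of $\alpha$) this reads $(\dashint_B w^{\alpha(p_2)})^{p_2-1}\leq(\dashint_B w^{\alpha(p_1)})^{p_1-1}$. Multiplying by $\dashint_B w$ and taking a supremum over balls yields $[w]_{p_2}\leq[w]_{p_1}$, which is exactly \eqref{inclusion_mucken}.

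For (3), the left estimate is a one-line Hölder argument: split $\chi_C=w^{1/p}\cdot w^{-1/p}\chi_C$, apply Hölder with conjugate exponents $(p,p/(p-1))$, extend the negative-power factor from $C$ to $D$ (viewing $D$ as a ball on which the $A_p$ condition applies), and bound the second factor by $[w]_p|D|^p/w(D)$ via the definition of $[w]_p$. The right estimate is deeper: it requires the \emph{reverse H\"older inequality} $(\dashint_D w^{1+\delta})^{1/(1+\delta)}\leq c\dashint_D w$, valid for some $\delta=\delta(n,p,[w]_p)>0$. Granted this, a second Hölder application with exponents $((1+\delta)/\delta,1+\delta)$ to $w(C)=\int_D w\,\chi_C$, followed by extending $\int_C w^{1+\delta}$ to $\int_D w^{1+\delta}$ and invoking reverse H\"older, gives $w(C)\leq c(|C|/|D|)^{\delta/(1+\delta)}w(D)$, so we set $\sigma=\delta/(1+\delta)\in(0,1)$.

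For (2), I would use the duality $w\in A_p\Longleftrightarrow w^{-1/(p-1)}\in A_{p'}$, $p'=p/(p-1)$, which follows immediately from unwrapping the definition. Apply the reverse H\"older inequality from (3) to the dual weight $w^{-1/(p-1)}$: this produces $(\dashint_B w^{-(1+\eta)/(p-1)})^{1/(1+\eta)}\leq c\dashint_B w^{-1/(p-1)}$ for some $\eta=\eta(n,p,[w]_p)>0$. Choose $\tilde\varepsilon$ by $p-\tilde\varepsilon-1=(p-1)/(1+\eta)$, so that $-1/(p-\tilde\varepsilon-1)=-(1+\eta)/(p-1)$; raising this reverse H\"older estimate to the power $p-1$ and multiplying by $\dashint_B w$ yields $[w]_{p-\tilde\varepsilon}\leq c^{p-1}[w]_p$, giving \eqref{self-improving}.

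The principal obstacle is the reverse H\"older inequality itself, on which both (2) and the sharp half of (3) depend. Its proof is not elementary: one performs a Calder\'on--Zygmund stopping-time decomposition on the level sets of $w$, uses the $A_p$ condition to propagate control between adjacent levels, and extracts the higher-integrability exponent $\delta$ from the resulting good-$\lambda$ bound. Since the full argument is classical and thoroughly documented, I would simply cite \cite[Ch.~9]{Gr09} rather than reproduce the stopping-time construction here.
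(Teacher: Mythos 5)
Your proof is correct and is exactly the standard Muckenhoupt-theory argument; the paper itself does not prove Lemma~\ref{property_A} at all but simply cites \cite{Gr09}, and your three-part reconstruction (power-mean monotonicity for (1), H\"older plus the $A_p$ condition for the lower bound in (3), reverse H\"older plus H\"older for the upper bound in (3), and duality $w\in A_p \Leftrightarrow w^{-1/(p-1)}\in A_{p'}$ together with reverse H\"older for (2)) is precisely what one finds in that reference. You correctly flag the one subtlety the paper's wording glosses over — that $D$ must be a ball (or a set comparable to one) for the $A_p$ and reverse H\"older averages to be meaningful, which is how the lemma is applied throughout the paper — and you rightly identify the reverse H\"older inequality as the only nontrivial ingredient, citing it rather than reproving it, matching the paper's level of detail.
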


\subsection{Weighted Orlicz spaces}
For given N-function $\Phi\in \Delta_2 \cap \nabla_2$ and weight $w$,
we define \textit{weighted Orlicz space} $L^{\Phi}_w(\Omega)$
as a set of all Lebesgue measurable functions $g$ on $\Omega$ such that 
$$\varrho_{\Phi,w}(g; \Omega):=  \int_{\Omega} \Phi(|g(x)|)w(x) dx < \infty.$$
Here, $\varrho_{\Phi,w}$ is called a \textit{modular of $\Phi$} in the weighted setting.
This space $L_w^\Phi(\Omega)$ is a reflexive Banach space equipped with the following \textit{Luxemburg norm}
\begin{equation}\label{luxemburg}
\|g\|_{L_w^\Phi(\Omega)} = \inf \left\{ \lambda>0 : \varrho_{\Phi,w}\bigg(\frac{g}{\lambda}, \Omega\bigg) \leq 1 \right\}.
\end{equation}
It is easy to check that $\|\lambda g\|_{L_w^{\Phi}}=|\lambda| \|g\|_{L_{w}^{\Phi}}$ for $\lambda \in \mathbb{R}$ and $\|f+g\|_{L_w^{\Phi}} \leq \|f\|_{L_w^{\Phi}}+\|g\|_{L_w^{\Phi}}$.
In fact, the Luxemburg norm \eqref{luxemburg} is specifically related to modular $\varrho_{\Phi,w}$ as
the \textit{unit ball property}
\begin{equation}\label{unitballproperty}
\varrho_{\Phi,w}(g; \Omega) \le 1 \ \ \Longleftrightarrow \ \ \Vert g \Vert_{L^{\Phi}_w(\Omega)} \le 1,
\end{equation} 
and
\[
 \|g\|_{L_w^\Phi(\Omega)} -1  \leq  \varrho_{\Phi,w}(g;\Omega)\leq c \left( \|g\|_{L_w^\Phi(\Omega)}^{q_2} +1 \right)
 \]
where the constant $c>1$ is independent of $g$, due to the property \eqref{ineq_Young}. 
	
Let us naturally define a \textit{weighted Orlicz Sobolev space} $W^{2,\Phi}_{w}(\Omega)$ which is  a collection of functions $g \in L^{\Phi}_{w}(\Omega)$ whose distributional derivatives up to order $2$ also belong to $L^{\Phi}_{w}(\Omega)$ with norm 
$$\Vert g \Vert_{W^{2,\Phi}_{w}(\Omega)} =\|g\| _{L^{\Phi}_{w}(\Omega)}+\|Dg\| _{L^{\Phi}_{w}(\Omega)}+\|D^2g\| _{L^{\Phi}_{w}(\Omega)}. $$

By the standard measure theory and basic properties of $N$-function $\Phi$, the following property can be derived for the weighted Orlicz spaces, see \cite[Lemma 4.6]{BOPS}.

\begin{lem}\label{measure theory}
	Assume that $g$ is a nonnegative measurable function in $\Omega$, and let $\eta >0 $ and $M>1$ be constants. For any $\Phi \in \Delta_2 \cap \nabla_2$ and $w \in A_p$ with some $1<p<\infty$, we have
\[
	g \in L_w^\Phi(\Omega)  \text{ if and only if } S:= \sum_{k \geq 1} \Phi(M^{k}) w\left(\left\{x \in \Omega : g(x) > \eta M^k \right\} \right) < \infty, 
	\]
	and moreover,
	\begin{equation}\label{estimate_S}
	c^{-1} S \leq \int_{\Omega} \Phi\left(|g(x)|\right)w(x)\,dx \leq c\left(w(\Omega) + S\right),
	\end{equation}
	where the constant $c>0$ depends only on $\eta, M , \Phi (1)$, $p$ and $[w]_p$.
\end{lem}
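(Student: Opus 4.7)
The plan is a layer-cake decomposition of $\int_\Omega \Phi(|g|) w\,dx$ at the geometric scale $\eta M^k$. Set $E_k = \{x \in \Omega : g(x) > \eta M^k\}$ and $a_k = w(E_k)$; the sequence $\{a_k\}_{k \geq 0}$ is nonincreasing, and
\[
\int_\Omega \Phi(g) w\,dx = \int_{\{g \leq \eta\}} \Phi(g) w\,dx + \sum_{k \geq 0} \int_{E_k \setminus E_{k+1}} \Phi(g) w\,dx,
\]
where $\eta M^k < g(x) \leq \eta M^{k+1}$ on the $k$-th annular piece. The two inequalities in \eqref{estimate_S} will follow by estimating $\Phi(g)$ on each piece by $\Phi(\eta M^{k+1})$ from above and $\Phi(\eta M^k)$ from below, converting these endpoint scales into $\Phi(M^k)$ via the $\Delta_2 \cap \nabla_2$ hypothesis, and then performing a summation by parts against $a_k$.

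For the upper bound, $\Phi \in \Delta_2$ together with \eqref{ineq_Young} gives $\Phi(\eta M^{k+1}) \leq C \Phi(M^k)$ with $C = C(\eta, M)$. Combined with $w(E_k \setminus E_{k+1}) \leq a_k$ and the trivial estimate $\int_{\{g \leq \eta\}} \Phi(g) w \leq \Phi(\eta) w(\Omega)$, this yields $\int \Phi(g) w \leq c(w(\Omega) + S)$. For the lower bound, a term-by-term comparison $\Phi(\eta M^k) - \Phi(\eta M^{k-1}) \gtrsim \Phi(\eta M^k)$ is generally false when $M$ is close to $1$, so I would group the indices into blocks of fixed length $L \in \mathbb{N}$ chosen so that $M^L \geq \tau_2$. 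The $\nabla_2$ condition then yields $\Phi(\eta M^{(j-1)L}) \leq \frac{1}{2\tau_2} \Phi(\eta M^{jL})$, and after Abel summation
\[
\int_\Omega \Phi(g) w\,dx \geq \sum_{j \geq 0} \Phi(\eta M^{jL})(a_{jL} - a_{(j+1)L}) \geq c \sum_{j \geq 1} \Phi(M^{jL}) a_{jL}.
\]
Re-expanding $S$ into blocks of $L$ consecutive indices and using $\Delta_2$ within each block gives $S \leq C \sum_{j \geq 0} \Phi(M^{jL}) a_{jL}$; the $j=0$ boundary term $\Phi(1) a_0$ is absorbed through the elementary bound $a_0 = w(\{g > \eta\}) \leq \Phi(\eta)^{-1} \int_\Omega \Phi(g) w\,dx$, which exploits $\Phi(g) \geq \Phi(\eta)$ on $\{g > \eta\}$. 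The biconditional $g \in L^\Phi_w(\Omega) \Leftrightarrow S < \infty$ is then immediate from the two inequalities, since $w(\Omega) < \infty$ on the bounded domain $\Omega$.

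The main obstacle is the lower bound: one needs the summation by parts to deliver a strictly positive coefficient in front of each $\Phi(M^k) a_k$ regardless of how close $M > 1$ is to $1$. This is precisely where $\nabla_2$ enters essentially, since it supplies the quantitative lower growth $\Phi(\tau_2 \rho) \geq 2\tau_2 \Phi(\rho)$ and hence the existence of a suitable block size $L$; without $\nabla_2$ one could have $i(\Phi) = 1$ and no such $L$ would exist. The Muckenhoupt assumption $w \in A_p$ plays no deeper role than providing $w(\Omega) < \infty$; the whole argument is a measure-theoretic manipulation once the grouping trick is in place.
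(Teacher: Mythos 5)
The paper does not actually prove this lemma; it simply cites \cite[Lemma 4.6]{BOPS}, so your self-contained layer-cake argument at the geometric scale $\eta M^k$ is the standard route and, as written, it does go through: the upper bound via $\Delta_2$, the blocked Abel summation for the lower bound, the Chebyshev bound $a_0\le \Phi(\eta)^{-1}\int_\Omega\Phi(g)w\,dx$ for the $j=0$ term, and $w(\Omega)<\infty$ from local integrability of $w\in A_p$ on the bounded $\Omega$ are all correct.

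Two remarks, one substantive. Your stated reason for introducing the block length $L$ is false: the term-by-term comparison $\Phi(\eta M^k)-\Phi(\eta M^{k-1})\ge c(M)\,\Phi(\eta M^k)$ holds for \emph{every} fixed $M>1$, because an $N$-function is convex with $\Phi(0)=0$ and hence satisfies $\Phi(\lambda\rho)\le\lambda\Phi(\rho)$ for $\lambda\in[0,1]$ (this is built into the paper's definition in Subsection 2.1); taking $\lambda=M^{-1}$ gives $\Phi(\eta M^{k-1})\le M^{-1}\Phi(\eta M^k)$, so the difference is at least $(1-M^{-1})\Phi(\eta M^k)$, and the constant is permitted to depend on $M$. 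Thus the plain, unblocked Abel summation already yields the lower bound, and your closing diagnosis that $\nabla_2$ enters essentially here is a misattribution: the lemma needs only convexity and $\Delta_2$ (the latter to pass between $\Phi(\eta M^k)$ and $\Phi(M^k)$ when $\eta<1$); $\nabla_2$ appears in the hypotheses only because the paper carries it throughout. Second, a small point you leave implicit in either version of the summation by parts: the boundary term $-\Phi(\eta M^{N})a_{N+1}$ (or its blocked analogue) must be disposed of. This is harmless because when $\int_\Omega\Phi(g)w\,dx<\infty$ one has $\Phi(\eta M^{N})a_{N+1}\le\int_{E_{N+1}}\Phi(g)w\,dx\to0$ as $N\to\infty$ (since $g=\infty$ on $\bigcap_k E_k$, that intersection is $w$-null), while if the integral is infinite the lower bound is vacuous; but it should be said.
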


\subsection{Main Result}

Given weight $w$ and $N$-function $\Phi \in \Delta_2 \cap \nabla_2$, we assume that
 \begin{equation}\label{mainass_w}
  w \in A_{i(\Phi)}.
  \end{equation}
This assumption is the necessary and sufficient condition under which the Hardy-Littlewood maximal operator is bounded on the corresponding weighted Orlicz space, see Lemma~\ref{Mbdd in wO}. In addition, since $\Phi \in \nabla_2$ implies $i(\Phi)>1$, the basic properties of the Muckenhoupt weight that we mentioned before can be applied to the proof of our main result under assumption \eqref{mainass_w}.

We also have the following results that will guarantee
the existence of strong solutions of our main equation. 
\begin{lem}[See Lemma~2.5 in \cite{BLO17}]\label{Phi-integrability_f}
Assume that $\Phi \in \Delta_2 \cap \nabla_2 $ and $w \in A_{i(\Phi)}$ and that $f \in L_w^{\Phi}(\Omega)$. 
Then there exists 
$\tilde{p} \in (1, i(\Phi)-\tilde{\varepsilon})$ depending on $n,\Phi, i(\Phi)$ and $[w]_{i(\Phi)}$ such that  $ f\in L^{\tilde{p}}(\Omega)$  with the estimates
\[
\|f\|_{L^{\tilde{p}}(\Omega)} \le c \|f\|_{ L_w^{i(\Phi)-\tilde\varepsilon}(\Omega)}   \le c \|f\|_{L_w^{\Phi}(\Omega)} 
\]
where $\tilde{\varepsilon}, c>0$ depend on $n, \Phi, i(\Phi),$ and $[w]_{i(\Phi)}$.
\end{lem}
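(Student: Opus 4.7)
The plan is to split the stated chain into two independent embeddings: with $p_0:=i(\Phi)-\tilde\varepsilon$, first the Orlicz-to-Lebesgue embedding $L_w^\Phi(\Omega)\hookrightarrow L_w^{p_0}(\Omega)$, and then a Hölder-type embedding $L_w^{p_0}(\Omega)\hookrightarrow L^{\tilde p}(\Omega)$ for a suitable $\tilde p>1$ obtained by pairing $|f|^{\tilde p}$ against a power of $w^{-1}$.

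For the first step I would invoke the characterization of $i(\Phi)$ recorded just after \eqref{ineq_Young}: for every $\tilde\varepsilon>0$ the value $q_1:=i(\Phi)-\tilde\varepsilon$ satisfies $\Phi(\lambda\rho)\ge c^{-1}\lambda^{q_1}\Phi(\rho)$ for all $\lambda\ge 1$. Specializing to $\rho=1$ yields a pointwise comparison of the form $t^{q_1}\le c\,\Phi(t)+c$ valid for every $t\ge 0$. Rescaling by the Luxemburg norm \eqref{luxemburg} and applying the unit ball property \eqref{unitballproperty}, then integrating against $w$ over $\Omega$ (whose $w$-measure is finite since $\Omega$ is bounded and $w$ is locally integrable), this upgrades to $\|f\|_{L_w^{p_0}(\Omega)}\le c\,\|f\|_{L_w^\Phi(\Omega)}$, which is the second inequality in the statement.

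For the second step I would use the self-improving property of Muckenhoupt weights. Since $\Phi\in\nabla_2$ forces $i(\Phi)>1$, applying \eqref{self-improving} to $w\in A_{i(\Phi)}$ produces a small $\tilde\varepsilon_0>0$ with $w\in A_q$, where $q:=i(\Phi)-\tilde\varepsilon_0>1$. I would then pick $\tilde\varepsilon\in(0,\tilde\varepsilon_0)$ and define $\tilde p:=p_0/q\in(1,p_0)$. Hölder's inequality with conjugate exponents $p_0/\tilde p$ and $p_0/(p_0-\tilde p)$ applied to the factorization $|f|^{\tilde p}=\bigl(|f|^{\tilde p}w^{\tilde p/p_0}\bigr)\cdot w^{-\tilde p/p_0}$ gives
\[
\int_\Omega |f|^{\tilde p}\,dx\le \Bigl(\int_\Omega |f|^{p_0} w\,dx\Bigr)^{\tilde p/p_0}\Bigl(\int_\Omega w^{-1/(q-1)}\,dx\Bigr)^{(p_0-\tilde p)/p_0},
\]
precisely because the choice $\tilde p=p_0/q$ forces the identity $\tilde p/(p_0-\tilde p)=1/(q-1)$. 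The last integral is finite by the $A_q$-condition on the bounded set $\Omega$, and raising to the $1/\tilde p$ power yields the first inequality.

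The main obstacle is bookkeeping: one must select a single $\tilde\varepsilon$ small enough to simultaneously make $i(\Phi)-\tilde\varepsilon$ a valid lower exponent in \eqref{ineq_Young}, to keep $\tilde p=p_0/q$ strictly above $1$ (so one needs $\tilde\varepsilon<\tilde\varepsilon_0$), and to ensure all resulting constants depend only on $n$, $\Phi$, $i(\Phi)$ and $[w]_{i(\Phi)}$. Beyond this careful choice of parameters, the argument is a routine combination of the lower-index characterization of $\Phi$ with the classical $A_p$-theory.
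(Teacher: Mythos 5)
Your proposal is correct and follows essentially the same route as the paper: first the embedding $L_w^\Phi(\Omega)\hookrightarrow L_w^{p_0}(\Omega)$ for $p_0$ slightly below $i(\Phi)$ via the lower-index inequality \eqref{ineq_Young} and the Luxemburg-norm normalization, then H\"older's inequality against $w^{-1/(s-1)}$ using the self-improved class $A_s$ with $s<p_0$ (the paper packages this second step as the averaged $A_s$ inequality over a ball containing $\Omega$, which is the same computation). The only differences are bookkeeping choices of the two exponents ($i(\Phi)-\tilde\varepsilon/2$ and $i(\Phi)-\tilde\varepsilon$ in the paper versus your $i(\Phi)-\tilde\varepsilon$ and $i(\Phi)-\tilde\varepsilon_0$), which do not affect the argument.
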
 
\begin{proof}
By \eqref{self-improving}, there exists a small constant $\tilde \varepsilon = \tilde \varepsilon(n, i(\Phi), [w]_{i(\Phi)}) >0$ such that 
 $w\in A_{i(\Phi)}\subset  A_{i(\Phi)-\tilde \varepsilon}$. 
	It allows $w\in A_{i(\Phi)-\tilde\varepsilon/2} $ thanks to \eqref{inclusion_mucken}.
By the definition of $i(\Phi)$, we see that $L^{\Phi}_w(\Omega)$ is continuously embedded in $L^{i(\Phi)-\tilde \varepsilon/2}_w(\Omega)$. 
In fact, by \eqref{ineq_Young} we obtain
	\begin{align*}
	\|f\|_{L_w^{i(\Phi)-\tilde{\varepsilon}/2}(\Omega)}
	&= \lambda\, \bigg( \int_{\Omega} \left( \frac{|f(x)|}{\lambda}\right)^{i(\Phi)-\frac{\tilde{\varepsilon}}{2}} w(x) dx \bigg)^{1/(i(\Phi)-\tilde{\varepsilon}/2)}\\
	&\leq \lambda
	\left( w(\Omega) + \frac{c}{\Phi(1)} \int_{\Omega} \Phi \left(\frac{|f(x)|}{\lambda}\right) w(x) dx  \right)^{1/(i(\Phi)-\tilde{\varepsilon}/2)}\\
	&\leq c  \|f\|_{L_{w}^{\Phi} (\Omega)}
	\end{align*}
	by letting $\lambda=\|f\|_{L_{w}^{\Phi}(\Omega)}$,
	where $c=c(w, \Phi, i(\Phi)) >0$.

Meanwhile, by the H\"older inequality
\[  \left(\dashint_B |f(x)|^{\frac{i(\Phi)-\tilde \varepsilon/2}{i(\Phi)-\tilde \varepsilon}}\chi_{\Omega}\, dx\right)^{i(\Phi)-\tilde \varepsilon}  \leq \frac{c}{w(B)} \int_{B} \left( |f(x)|^{\frac{i(\Phi)-\tilde \varepsilon/2}{i(\Phi)-\tilde \varepsilon}}\chi_{\Omega}\right)^{i(\Phi)-\tilde \varepsilon} w(x) \;dx
\]
where $c=[w]_{i(\Phi)-\tilde{\varepsilon}}$.
Letting $\tilde p:= \frac{i(\Phi)-\tilde \varepsilon/2}{i(\Phi)-\tilde \varepsilon} $, we have 
\[
\| f\|_{L^{\tilde p}(\Omega)}  \leq \frac{c |B|^{\frac{i(\Phi)-\tilde \varepsilon}{i(\Phi)-\tilde \varepsilon/2}} }{w(B)^{\frac{1}{i(\Phi)-\tilde \varepsilon/2}}}\|f\|_{L^{i(\Phi)-\tilde \varepsilon/2}_w(\Omega)}
\]
where $B$ is a ball containing $\Omega$.

\end{proof}

Throughout the paper, the coefficient matrix $\textbf{A}=(a_{ij})$  is supposed to be symmetric and uniformly elliptic.
Our principal assumption on the coefficient matrix  $\textbf{A}$ is following:
\begin{defn}\label{smallBMOcondition}
For $\delta, R>0$,  the coefficient matrix $\textbf{A} =(a_{ij})$ is \textit{$(\delta, R)$-vanishing}
 if 
 \begin{equation}\label{delta-vanishing}
\sup_{0<r \le R} \sup_{y\in \mathbb{R}^n} \dashint_{B_r(y)}|\textbf{A}(x)-\overline{\textbf{A}}_{B_r(y)}| dx \leq \delta.
\end{equation}
\end{defn}
The $(\delta, R)$-vanishing condition on the coefficient matrix  $\textbf{A} =(a_{ij})$ in \eqref{delta-vanishing} means coefficients $a_{ij}$ are in a set of 
BMO functions with small BMO semi-norm on every $B_r(y)$ for $y\in \mathbb{R}^n$.
We note that this condition is weaker than that of vanishing mean oscillation (VMO), because $a_{ij} \in$ VMO implies that for each $\delta>0$ there exists $R>0$ such that $a_{ij}$ is $(\delta, R)$-vanishing.

For the sake of convenience, we consider the space $W_V^{2, \gamma}(\Omega)$ defined for $1<\gamma<\infty$ as a closure of $C^{\infty}_0(\Omega)$ functions equipped with 
\begin{equation*}
\|u\|_{W_V^{2, \gamma}(\Omega)} := \|u\|_{W^{2, \gamma}(\Omega)}+\|Vu\|_{L^\gamma(\Omega)}.
\end{equation*} 
Now let us state our main result.

\begin{thm}\label{mainresult}
Given any $N$-function $\Phi \in \Delta_2 \cap \nabla_2 $, let $w \in A_{i(\Phi)}$. Assume that $V\in  \mathcal{RH}_{q}$ for some $q \ge n/2$ and 
$f\in L_{w}^{\Phi}(B_6)$.
There exists a small $\delta=\delta(n,\mu,q, c_q, \Phi,$ $i(\Phi),[w]_{i(\Phi)})>0$ such that if $\textbf{A}$ is $(\delta, 6)$-vanishing,
then for any solution $u\in W_V^{2, \gamma}(B_6) $ of \eqref{second_order_ell} in $B_6$ with $1<\gamma \leq \min\{\tilde{p},q\}$, we have $D^2 u, Vu \in L_w^{\Phi} (B_1)$ with the estimate
\begin{equation*}
\|D^2u\|_{L_w^{\Phi} (B_1)}+\|Vu\|_{L_w^{\Phi}(B_1)} \leq c\,(\|f\|_{L_w^{\Phi}(B_6)}+\|u\|_{L^\gamma(B_6)})
\end{equation*} 	
with the positive constant $c$ depending on $n,\mu, q, c_q, w, \Phi, i(\Phi)$ and $[w]_{i(\Phi)}$, where $\tilde{p}$ is given in Lemma~\ref{Phi-integrability_f}.
\end{thm}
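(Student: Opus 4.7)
The plan is to follow the Caffarelli--Peral type maximal function method combined with a modified Vitali covering argument, as indicated in the introduction; the endgame is to integrate a geometric decay estimate for level sets of an appropriate maximal function against the Orlicz weight via Lemma \ref{measure theory}. I would set $G := |D^2u|^\gamma + |Vu|^\gamma$ and $F := |f|^\gamma$, and target a good-$\lambda$ inequality of the form
\[
\bigl|\{M(G) > N\lambda\} \cap B_1\bigr| \leq \epsilon \bigl|\{M(G) > \lambda\} \cap B_1\bigr| + \bigl|\{M(F) > c\lambda\} \cap B_1\bigr|
\]
for all $\lambda \geq \lambda_0$, with $N$ a fixed constant and $\epsilon$ as small as needed provided $\delta$ in the $(\delta, 6)$-vanishing condition is chosen sufficiently small. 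The threshold $\lambda_0$ is controlled by $\|f\|_{L^\gamma(B_6)}^\gamma + \|u\|_{L^\gamma(B_6)}^\gamma$ via a baseline $L^\gamma$ estimate for \eqref{second_order_ell}, and Lemma \ref{Phi-integrability_f} guarantees $f \in L^{\tilde p}(B_6)$ with $\tilde p > \gamma$, so this baseline is accessible.

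The bulk of the work is the comparison step. On every ball $B_{4r}(y) \subset B_6$, I would freeze to $\overline{\textbf{A}} := \overline{\textbf{A}}_{B_{2r}(y)}$ and solve $-\overline{a}_{ij}D_{ij}h = 0$ in $B_{2r}(y)$ with $h = u$ on $\partial B_{2r}(y)$, so that classical constant-coefficient regularity yields $\|D^2 h\|_{L^\infty(B_r(y))} \leq c\,(\dashint_{B_{2r}(y)} |D^2 u|^\gamma)^{1/\gamma}$. The difference $v = u - h$ then satisfies
\[
-\overline{a}_{ij}D_{ij}v = (\textbf{A}-\overline{\textbf{A}}):D^2 u - Vu + f \quad \text{in } B_{2r}(y),
\]
and Calder\'on--Zygmund theory for constant coefficients, enhanced with a higher integrability self-improvement of $|D^2 u|$ coming from the small BMO of $\textbf{A}$, produces
\[
\left(\dashint_{B_r(y)}|D^2 v|^\gamma\right)^{\!1/\gamma} \!\!\leq c\delta^{\kappa}\!\left(\dashint_{B_{4r}(y)}|D^2 u|^\gamma\right)^{\!1/\gamma} \!\!+ c\!\left(\dashint_{B_{4r}(y)}|Vu|^\gamma + |f|^\gamma\right)^{\!1/\gamma}
\]
for some $\kappa>0$. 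A companion estimate for $\|Vu\|_{L^\gamma(B_r(y))}$ is obtained by a Shen-type argument: after rescaling so that $r=1$, the rescaled potential $r^2 V(r\cdot + y)$ still lies in $\mathcal{RH}_q$ with the same constant $c_q$ by \eqref{invariant}, and the reverse H\"older machinery applied to the localized equation with frozen principal part yields the parallel bound on $Vu$.

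With these comparisons in hand, the modified Vitali covering/density argument of \cite{W} supplies the good-$\lambda$ inequality: whenever $|\{M(G) > N\lambda\} \cap B_r(x)| > \epsilon|B_r(x)|$ for some $B_r(x) \subset B_1$, the comparison forces the whole of $B_r(x)$ into $\{M(G) > \lambda\} \cup \{M(F) > c\lambda\}$ modulo a set of measure $\leq \epsilon |B_r(x)|$. Iterating this $k$ times and passing from Lebesgue to $w$-measure via \eqref{measure comparison} produces a geometric decay for $w(\{M(G) > N^k\lambda_0\} \cap B_1)$. Testing against the $N$-function $\tilde\Phi(\rho) := \Phi(\rho^{1/\gamma})$, which belongs to $\Delta_2 \cap \nabla_2$ since $\gamma < i(\Phi)$ is arranged through Lemma \ref{Phi-integrability_f}, summing over $k$, and applying Lemma \ref{measure theory} together with the boundedness of the Hardy--Littlewood maximal operator on $L^\Phi_w$ (which uses exactly $w\in A_{i(\Phi)}$) yields the asserted Orlicz norm bound; the summability $\sum_k \tilde\Phi(N^k)\epsilon^{\sigma k} < \infty$ for small $\epsilon$ is guaranteed by \eqref{ineq_Young}.

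The critical obstacle is the comparison step for $Vu$. A straightforward freezing of the coefficients controls $D^2 v$ but leaves an $\|Vu\|_{L^\gamma}$-piece on the right-hand side that cannot be peeled off without re-entering the equation, and the coupling between $D^2 u$ and $Vu$ through \eqref{second_order_ell} forbids a one-shot estimate. Resolving this requires adapting Shen's $L^p$ framework for Schr\"odinger operators---built from the $\mathcal{RH}_q$ hypothesis and the fundamental solution of $-\Delta + V$---to the frozen variable-coefficient principal part $-\overline{a}_{ij}D_{ij} + V$, so that $\|Vu\|_{L^\gamma(B_r(y))}$ enjoys the same $\delta^\kappa$-type decay as $\|D^2 v\|_{L^\gamma(B_r(y))}$. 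This quantitative two-headed comparison is what Section \ref{se4} is stated to supply.
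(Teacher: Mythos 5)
Your high-level framework---maximal function decay estimates, comparison with a frozen-coefficient problem, modified Vitali covering, and summing level sets against $\Phi$ and $w$ via Lemma~\ref{measure theory}---matches the paper's. The genuine gap is in how you handle the potential term. You propose to track $G := |D^2u|^\gamma + |Vu|^\gamma$ and prove a joint good-$\lambda$ inequality for both pieces, and you correctly flag that peeling off $\|Vu\|_{L^\gamma}$ from the comparison step is the critical obstacle; your proposed fix is to adapt Shen's fundamental-solution analysis of $-\Delta+V$ to the frozen operator $-\overline{a}_{ij}D_{ij}+V$. You leave this unexecuted and assert that Section~\ref{se4} supplies such a ``two-headed comparison.'' It does not. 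The paper tracks only $\mathcal{M}(|D^2u|^\gamma)$ in the decay iteration. In Lemma~\ref{keylemma} the potential term is eliminated once and for all at the comparison step by citing the established $W^{2,p}$ theory for $\mathcal{RH}_q$ potentials (\cite[Theorem~13]{BBHV}): $\|Vu\|_{L^\gamma(B_{7/2})}\le c\bigl(\|f\|_{L^\gamma(B_4)}+\|u\|_{L^\gamma(B_4)}\bigr)$, which under the normalizations $\mathcal{M}(|f|^\gamma)(x_0)\le\delta^\gamma$ and $\|u\|_{L^\gamma(B_6)}\le\delta$ is simply absorbed as a quantity of size $\lesssim\delta$. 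Then, at the very end, $\|Vu\|_{L_w^\Phi(B_1)}$ is recovered for free directly from the equation, $Vu = f + a_{ij}D_{ij}u$, giving $\|Vu\|_{L_w^\Phi(B_1)}\le\|f\|_{L_w^\Phi(B_1)}+c\|D^2u\|_{L_w^\Phi(B_1)}$. No Shen-type construction for a variable principal part is ever needed, and without that construction your outline has a hole exactly where you said it would.

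A secondary discrepancy: your comparison estimate posits a multiplicative $\delta^\kappa$ factor from higher-integrability self-improvement of $D^2u$. The paper's Lemma~\ref{keylemma} instead obtains $\|D^2(u-h)\|_{L^\gamma(B_{7/2})}\lesssim\delta$ in \emph{absolute} terms, using the contradiction-point normalization $\mathcal{M}(|D^2u|^\gamma)(x_0)\le 1$ together with the small-data hypotheses on $f$ and $u$; the $(\delta,6)$-vanishing condition enters through the applicability and constants of the CFL/BBHV $W^{2,\gamma}$ estimates, not through a $\delta^\kappa$ reverse-H\"older peeling of $D^2u$. Both mechanisms can in principle drive a good-$\lambda$ argument, but the normalization route is what the paper actually uses and it avoids having to invoke Gehring-type self-improvement for nondivergence equations.
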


\begin{rem}
For any $1<\gamma \leq \min\{ \tilde{p}, q \} $,
we can deal with strong solutions $u \in W_V^{2, \gamma}(B_6)$ of the  main equation \eqref{second_order_ell} in $B_6$ provided that $f \in L^{\Phi}_w(B_6) \subset L^{\tilde{p}}(B_6) \subset L^{\gamma}(B_6) $ from Lemma~\ref{Phi-integrability_f}.
\end{rem}

\begin{rem}
As noted in the introduction, the class $ \mathcal{RH}_q$ possesses a self-improving property, allowing the range of $q$ to be extended from $q >n/2$ to $q \ge n/2$. Consequently, it is enough to prove our main result, Theorem~\ref{mainresult}, for potentials $V$ that belong to the class $ \mathcal{RH}_q$ for some $q > n/2$.
\end{rem}

\section{Maximal function approach}\label{se4}

\subsection{Maximal operator}
We recall some definitions and useful properties of Hardy-Littlewood maximal function.
For a locally integrable function $g$ defined in $\mathbb{R}^n$, we denote the Hardy-Littlewood maximal function of $g$ by
$$
\mathcal{M}g(x)=\sup_{r>0} \dashint_{B_r(x)} |g(y)| dy
$$
at each point $x\in \mathbb{R}^n$.
We denote
$$
\mathcal{M}_{\Omega}g(x)=\mathcal{M}(\chi_{\Omega}g)(x).
$$

Let us now state the basic properties of the Hardy-Littlewood maximal function.
From the definition of the maximal operator $\mathcal{M}$, one  can easily see that
\begin{equation*}
|f(x)|^{p}  
\leq (\mathcal{M}(|f(x)|^{\gamma}))^{\frac{p}{\gamma}}
\end{equation*}
for a.e. $x\in \Omega$.
On the other hand, the following estimates are well-known:
\begin{itemize}
	\item [(1)] (strong p-p estimate) for $1<p\leq \infty$,
	$$
	\|\mathcal{M}g\|_{L^p(\mathbb{R}^n)} \leq C_{n,p} \|g\|_{L^p(\mathbb{R}^n)},
	$$
	\item [(2)] (weak 1-1 estimate) for all $t>0$,
	\begin{equation}\label{weak1-1}
	|\{x\in \mathbb{R}^n\,:\, \mathcal{M}g \ge t\}| \leq \frac{C}{t}\|g\|_{L^1(\mathbb{R}^n)}.
	\end{equation}
\end{itemize}
Similar estimates hold in the weighted Orlicz setting when $w$ belongs to the certain Muckenhoupt class, see \cite{KT82} and \cite[Theorem 2.1.1]{KK91} for their proofs and additional details.
\begin{lem}\label{Mbdd in wO}
Given any $N$-function $\Phi \in \Delta_2 \cap \nabla_2 $, let $w \in A_{i(\Phi)}$.
Then there exists a positive constant $c=c(n,\Phi,w)$ such that
	\begin{equation*}
	\int_{\mathbb{R}^{n}}\Phi(|g|) w\,dx\leq \int_{\mathbb{R}^{n}} \Phi({\mathcal{M}}g)w\,dx \leq c \int_{\mathbb{R}^{n}}\Phi(|g|) w\,dx,
	\end{equation*}
	for all $g \in L_w^\Phi(\mathbb{R}^{n})$.
\end{lem}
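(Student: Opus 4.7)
The left-hand inequality is immediate from monotonicity of $\Phi$, since $|g(x)| \le \mathcal{M}g(x)$ for a.e. $x$. The substance is the right-hand inequality. The plan is to reduce the Orlicz estimate to a single weighted weak-type inequality at the level of $L^{p_0}$ for an appropriate $p_0 < i(\Phi)$, and then recover the $\Phi$-integral via the layer-cake representation together with the growth control \eqref{ineq_Young}.

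First I would use the fact that $\Phi \in \nabla_2$ forces $i(\Phi) > 1$, so that \eqref{self-improving} applied to $w \in A_{i(\Phi)}$ yields some $p_0 \in (1, i(\Phi))$ with $w \in A_{p_0}$. For such $p_0$, Muckenhoupt's theorem gives the weighted weak-type $(p_0,p_0)$ bound
\[
w\bigl(\{x : \mathcal{M}g(x) > t\}\bigr) \;\le\; \frac{C}{t^{p_0}} \int_{\mathbb{R}^n} |g|^{p_0} w\,dx.
\]
To get the sharper truncation form needed for Orlicz integration, split $g = g_1 + g_2$ where $g_1 = g\chi_{\{|g|\le t\}}$ and $g_2 = g\chi_{\{|g|>t\}}$. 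Then $\mathcal{M}g_1 \le t$ pointwise, so $\{\mathcal{M}g > 2t\} \subset \{\mathcal{M}g_2 > t\}$, and the weak-type bound applied to $g_2$ gives
\[
w\bigl(\{x : \mathcal{M}g(x) > 2t\}\bigr) \;\le\; \frac{C}{t^{p_0}} \int_{\{|g|>t\}} |g|^{p_0} w\,dx.
\]

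Next I would write the Orlicz integral via the layer-cake formula. Since $\Phi \in \Delta_2$, $\Phi$ is absolutely continuous and $\Phi(2t) \le \tau_1 \Phi(t)$. Using Fubini,
\[
\int_{\mathbb{R}^n} \Phi(\mathcal{M}g)\, w\,dx \;=\; \int_0^\infty w\bigl(\{\mathcal{M}g > 2t\}\bigr)\, d\Phi(2t) \;\le\; C\int_{\mathbb{R}^n} |g(x)|^{p_0} w(x) \left( \int_0^{|g(x)|} \frac{d\Phi(2t)}{t^{p_0}} \right) dx.
\]
The inner integral should be bounded by $C\,\Phi(|g(x)|)/|g(x)|^{p_0}$. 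This is exactly where \eqref{ineq_Young} enters: because $p_0 < i(\Phi)$, we may choose the lower exponent $q_1$ in \eqref{ineq_Young} with $p_0 < q_1 \le i(\Phi)$, so that $\Phi(2t)/t^{q_1}$ is (comparable to) a nondecreasing function. Integration by parts plus the elementary estimate $\int_0^{s} \tau^{q_1 - p_0 - 1}\,d\tau = s^{q_1-p_0}/(q_1-p_0)$ then yields the bound $C\Phi(|g(x)|)/|g(x)|^{p_0}$ uniformly in $g(x)$, which after substitution gives exactly $\int \Phi(|g|)\,w\,dx$ on the right.

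The main obstacle is the bookkeeping in the last step: one must justify the passage from weak-type to strong Orlicz estimate via the growth inequality in a way that only uses the lower index $i(\Phi)$ and the chosen $p_0 < i(\Phi)$. Care is needed because $\Phi'$ may not behave like a pure power, but the two-sided comparison \eqref{ineq_Young} and the $\Delta_2$-hypothesis together are tailor-made to replace the pure-power computation one would do for $L^p$. Once this is in place, the constant $c$ depends only on $n$, $\Phi$ (through $\tau_1, \tau_2, q_1, q_2$), and $[w]_{p_0}$, hence on $n$, $\Phi$, $w$ as claimed.
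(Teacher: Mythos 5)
Your argument is correct: the paper does not actually prove this lemma but only cites \cite{KT82} and \cite[Theorem~2.1.1]{KK91}, and the proof you outline (self-improvement to $w\in A_{p_0}$ for some $p_0<i(\Phi)$, the truncated weighted weak-type $(p_0,p_0)$ bound, then layer-cake integration using the almost-monotonicity of $\Phi(t)/t^{q_1}$ for an exponent $q_1\in(p_0,i(\Phi))$ from \eqref{ineq_Young}) is precisely the standard argument of those references. The one point to phrase carefully is that $q_1$ must be taken strictly below $i(\Phi)$, since the supremum of admissible lower exponents in \eqref{ineq_Young} need not be attained; this choice is possible exactly because $p_0<i(\Phi)$, which in turn is where the hypothesis $w\in A_{i(\Phi)}$ together with \eqref{self-improving} is used.
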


\subsection{Key Lemmas}

The following lemma is crucial in proving our main theorem, Theorem \ref{mainresult}.
\begin{lem}\label{keylemma}
For any $\varepsilon>0$
there exists a small $\delta = \delta( n, \gamma, \mu, q, c_q, \varepsilon)>0$
so that
for
any solution $u \in W_V^{2, \gamma}(\Omega)$ of 
\begin{equation*}
-a_{ij} D_{ij} u+Vu=f \ \ \text{ in } \Omega \supset B_6
\end{equation*}
assuming that $\textbf{A}=(a_{ij})$ is $(\delta, 6)$-vanishing and
 $V\in \mathcal{RH}_q$ for some $q>n/2$,
 if $\|u\|_{L^{\gamma}(B_6)} \le \delta$ and
\begin{equation}\label{assump}
\{x\in \Omega\,:\, \mathcal{M}(|D^2u|^{\gamma})(x)\leq 1 \} \cap \{ x\in \Omega\,:\, \mathcal{M}(|f|^{\gamma})(x)\leq \delta^{\gamma}\} \cap B_1\neq \emptyset
\end{equation}
with $1<\gamma \le q,$
then we have
\begin{equation}\label{key} 
|\{x\in \Omega\,:\, \mathcal{M}(|D^2u|^{\gamma})(x)>N_1^{\gamma} \}\cap B_1| <\varepsilon |B_1|
\end{equation}
where a constant $N_1>1$ depends on $n, \gamma, \mu, q$ and $c_q $.
\end{lem}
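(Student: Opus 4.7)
The plan is to implement the Caffarelli–Wang maximal-function comparison scheme, combined with a two-level approximation that handles, in turn, the right-hand side $f$ and the Schrödinger term $Vu$. First, from the nonempty intersection in \eqref{assump} I would fix a point $x_0\in B_1$ with $\mathcal{M}(|D^2u|^\gamma)(x_0)\le 1$ and $\mathcal{M}(|f|^\gamma)(x_0)\le\delta^\gamma$. Since $B_5(x_0)\supset B_4$ and every ball $B_r(x_0)$ with $r\ge 2$ controls the ambient averages up to a dimensional factor, this produces the normalized bounds
$$\dashint_{B_4}|D^2u|^\gamma\,dx\le C_n,\qquad \dashint_{B_4}|f|^\gamma\,dx\le C_n\delta^\gamma,$$
and, via the equation and the hypothesis $\|u\|_{L^\gamma(B_6)}\le\delta$, also an $L^\gamma$-bound for $Vu$ on $B_4$.

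Next, I would introduce two comparison functions on successively smaller balls. Let $v\in u+W_0^{1,\gamma}(B_4)$ solve $-a_{ij}D_{ij}v+Vv=0$ in $B_4$ with $v=u$ on $\partial B_4$; the Calderón–Zygmund estimate for the Schrödinger operator with small-BMO coefficients and $V\in\mathcal{RH}_q$, valid in the range $1<\gamma\le q$, gives
$$\|D^2(u-v)\|_{L^\gamma(B_4)}+\|V(u-v)\|_{L^\gamma(B_4)}\le C\|f\|_{L^\gamma(B_4)}\le C\delta.$$
Then let $h$ solve the constant-coefficient homogeneous problem $-\bar a_{ij}D_{ij}h=0$ in $B_3$, with $h=v$ on $\partial B_3$ and $\bar a=\overline{\mathbf{A}}_{B_3}$. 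From the identity
$$-\bar a_{ij}D_{ij}(v-h)=(\bar a_{ij}-a_{ij})D_{ij}v - Vv\quad\text{in }B_3,$$
the $W^{2,\gamma}$ estimate for the frozen operator combined with the $(\delta,6)$-vanishing of $\mathbf{A}$ and a Gehring-type self-improvement of $D^2v$ handles the first remainder, while the $\mathcal{RH}_q$ property of $V$, rescaled through \eqref{invariant}, handles the second, producing
$$\|D^2(v-h)\|_{L^\gamma(B_3)}\le C\delta^{\eta}$$
for some $\eta\in(0,1)$. Interior regularity for the homogeneous constant-coefficient equation then gives a universal pointwise bound $\|D^2h\|_{L^\infty(B_2)}\le N_0$.

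Finally, I would combine these ingredients via the weak $(1,1)$ inequality \eqref{weak1-1}. For $x\in B_1$ and a ball $B_r(x)$ with $r>1$ one has $B_r(x)\subset B_{r+2}(x_0)$, so Step 1 yields $\dashint_{B_r(x)}|D^2u|^\gamma\le C_n$; for $r\le 1$ one has $B_r(x)\subset B_2$, so the split $|D^2u|^\gamma\le C(|D^2(u-h)|^\gamma+|D^2h|^\gamma)\chi_{B_2}$ applies. Setting
$$N_1^\gamma:=\max\bigl\{C_n,\ 3CN_0^\gamma\bigr\},$$
a universal constant depending only on $n,\gamma,\mu,q,c_q$, the level set $\{x\in B_1:\mathcal{M}(|D^2u|^\gamma)(x)>N_1^\gamma\}$ is contained in $\{x:\mathcal{M}(|D^2(u-h)|^\gamma\chi_{B_2})(x)>N_1^\gamma/(3C)\}$, and \eqref{weak1-1} bounds its measure by
$$\frac{C}{N_1^\gamma}\int_{B_2}|D^2(u-h)|^\gamma\,dx\le C(\delta^\gamma+\delta^{\eta\gamma}).$$
Choosing $\delta$ small enough in terms of $\varepsilon$ makes the right-hand side smaller than $\varepsilon|B_1|$, which completes the proof.

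I expect the main obstacle to be the frozen-coefficient approximation step $v\mapsto h$: unlike the pure-Laplacian case, the Schrödinger remainder $Vv$ is only bounded, not small, so to convert it into the required smallness $\delta^\eta$ one must quantitatively exploit the reverse Hölder property $V\in\mathcal{RH}_q$ together with the scaling invariance \eqref{invariant}, most likely by reducing the comparison to a small enough subscale on which the averaged action of $V$ becomes small. The oscillation remainder $(\bar a_{ij}-a_{ij})D_{ij}v$ is comparatively routine, relying on small BMO of $\mathbf{A}$ and a Gehring self-improvement for the homogeneous Schrödinger operator, both available from existing theory.
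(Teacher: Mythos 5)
Your overall architecture (locate $x_0$ from \eqref{assump}, normalize the averages, compare with a frozen-coefficient harmonic replacement, finish with the weak $(1,1)$ inequality) matches the paper, but there is a genuine gap exactly where you flag "the main obstacle," and your suggested fix would not close it. In the step $v\mapsto h$ you need $\|D^2(v-h)\|_{L^\gamma(B_3)}\le C\delta^{\eta}$, and the right-hand side of the comparison equation contains $Vv$. All you can extract from the equation and the normalization is $\|Vu\|_{L^\gamma(B_4)}\le \|f\|_{L^\gamma(B_4)}+c\|D^2u\|_{L^\gamma(B_4)}\le C_n\delta+cC_n$, which is $O(1)$, not $o(1)$; hence $\|Vv\|_{L^\gamma}\le\|V(u-v)\|_{L^\gamma}+\|Vu\|_{L^\gamma}$ is only bounded, and the asserted $\delta^{\eta}$ decay is unjustified. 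Your proposed remedy --- passing to a small subscale where the "averaged action of $V$ becomes small" --- cannot work for the lemma as stated: it is a fixed unit-scale statement, and already for $V\equiv 1\in\mathcal{RH}_\infty$ the averages of $V$ do not become small on any ball. (The rescaled potential $r^2V(r\cdot+y)$ only enters in the scaled version of the lemma, which is derived \emph{from} this one, not used to prove it.)

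The missing ingredient is the interior a priori estimate for the Schr\"odinger term itself, namely $\|Vu\|_{L^{\gamma}(B_{7/2})}\le c\,(\|f\|_{L^{\gamma}(B_4)}+\|u\|_{L^{\gamma}(B_4)})$, which the paper imports from \cite[Theorem~13]{BBHV} and which is where $V\in\mathcal{RH}_q$ and $1<\gamma\le q$ are actually used. Combined with the hypothesis $\|u\|_{L^{\gamma}(B_6)}\le\delta$ --- whose role you never fully exploit --- this makes $\|Vu\|_{L^{\gamma}}\le c\delta$, so the potential term is a genuinely small perturbation from the start. With that in hand the intermediate function $v$ is unnecessary: the paper compares $u$ directly with the solution $h$ of $-\overline{a_{ij}}_{B_{7/2}}D_{ij}h=0$, $h=u$ on $\partial B_{7/2}$, writes $-a_{ij}D_{ij}(u-h)=f-Vu+a_{ij}D_{ij}h$, and gets $\|D^2(u-h)\|_{L^{\gamma}(B_{7/2})}\le c\delta$ in one stroke from the Dirichlet $W^{2,\gamma}$ theory for small-BMO coefficients without potential. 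If you retain your two-step decomposition you must still invoke this $Vu$-estimate (or an equivalent) to make $Vv$ small, at which point the extra layer buys nothing. The remaining steps of your argument (interior $C^{1,1}$ bound for $h$, the radius dichotomy $r>1$ versus $r\le1$, and the weak $(1,1)$ estimate) are sound and agree with the paper.
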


\begin{proof}
	By \eqref{assump}, there exists $x_0 \in B_1$ such that
	\begin{equation}\label{con1}
	\dashint_{B_{\rho}(x_0)} 
	|D^2u(y)|^{\gamma}\, dy \leq 1 
	\end{equation}
	and
	\begin{equation*}
	\dashint_{B_{\rho}(x_0)} |f(y)|^{\gamma}\, dy \leq \delta^{\gamma} 
	\end{equation*}
	for all $\rho>0$. 
	Since $B_\frac{7}{2}\subset B_5(x_0)$,  
	we get 
	\begin{equation}\label{con_Vu}
	\dashint_{B_\frac{7}{2}} |D^2u(y)|^{\gamma}\, dy \leq \left(\frac{10}{7}\right)^{n}  \dashint_{B_5(x_0)} |D^2u(y)|^{\gamma}\, dy \leq 2^n 
	\end{equation}
	from \eqref{con1}.
	In a similar way,
	we have
	\begin{equation}\label{con_f}
	\dashint_{B_4} |f(y)|^{\gamma}\, dy \leq 2^{n}\delta^{\gamma}
	\end{equation}
	because $B_4 \subset B_5(x_0)$.
	Let us consider a solution $h \in W^{2, \gamma}(B_\frac{7}{2})$ to the problem 
	\begin{equation}\label{homogeneous case}
	\left\{
	\begin{aligned}
	-\overline{a_{ij}}_{B_\frac{7}{2}} D_{ij}h=0\qquad &\text{in} \quad B_\frac{7}{2},\\
	h=u\qquad &\text{on} \quad \partial B_\frac{7}{2}.
	\end{aligned}
	\right.
	\end{equation}
		Then we observe that $u-h \in W^{2, \gamma}(B_\frac{7}{2}) \cap W_0^{1, \gamma}(B_\frac{7}{2})$ solves
	\[
	\left\{
	\begin{aligned}
	-a_{ij} D_{ij}(u-h)&= f-Vu + a_{ij}D_{ij}h   &\text{in}& \quad B_\frac{7}{2},\\
	u-h&=0 &\text{on}& \quad \partial B_\frac{7}{2}.
	\end{aligned}
	\right.
	\]
	From \cite{CFL91}, 
	there exists a constant $c=c(n, \gamma, \mu)>0$ so that
	\begin{align}\label{Hest_u-h}
	\|D^2(u-h)\|_{L^{{\gamma}}(B_\frac{7}{2})}  &\le c\,\|f\|_{L^{\gamma}(B_\frac{7}{2})} + c\,\|Vu\|_{L^{\gamma}(B_\frac{7}{2})} +c\,\| a_{ij}\|_{L^{\infty}(B_\frac{7}{2})} \|D^2 h \|_{L^{\gamma}(B_\frac{7}{2})}\nonumber\\
	&  \le c\,\big(\|f\|_{L^{\gamma}(B_\frac{7}{2})} + \|Vu\|_{L^{\gamma}(B_\frac{7}{2})} + \|D^2 h \|_{L^{\gamma}(B_\frac{7}{2})}\big)
	\end{align}
	since $a_{ij}\in L^{\infty}$ from the symmetry and uniform ellipticity conditions.
		Now we use \cite[Theorem~13]{BBHV} to derive
	\[
	\|Vu\|_{L^{\gamma}(B_\frac{7}{2})} \le \,c ( \|f\|_{L^{\gamma}(B_4)}+\|u\|_{L^{\gamma}(B_4)}).
	\]
	In addition, from the boundary $L^\gamma$ estimates for the problem \eqref{homogeneous case} (see \cite[Chapter 9]{GT} or \cite{CFL93}), 
	we note that 
	\[
	\|D^2 h \|_{L^{\gamma}(B_\frac{7}{2})}\le  c \| u \|_{L^{\gamma}(B_\frac{7}{2})}.
	\]
Inserting these estimates into \eqref{Hest_u-h}, we arrive at
	\begin{align}\label{comparison}
	\|D^2(u-h)\|_{L^{{\gamma}}(B_\frac{7}{2})} 
	\le \,c\|f\|_{L^{\gamma}(B_4)}+c\|u\|_{L^{\gamma}(B_6)} \le c\delta 
	\end{align}
	from the assumption $\|u\|_{L^{\gamma}(B_6)} \le \delta$ and \eqref{con_f}.
	Here, $c$ is a positive constant depending on $n, \gamma, \mu, c_q$ and $q$.
	Then by \eqref{con_Vu} we have 
	\begin{align}\label{estimate_Vh}
	\|D^2h\|_{L^{\gamma}(B_\frac{7}{2})} 
	&\le  \|D^2(u- h)\|_{L^{\gamma}(B_\frac{7}{2})} +  \|D^2u\|_{L^{\gamma}(B_\frac{7}{2})} \nonumber\\
	&\le  c\delta+ c7^{n/\gamma} \le N_0
	\end{align}
	for some constant $N_0=N_0( n, \gamma, \mu, c_q, q)\ge1$.
	Meanwhile, applying the local $C^{1, 1}$ regularity for \eqref{homogeneous case} together with \eqref{estimate_Vh} we discover 
	\begin{align}\label{L infty estimate_Du}
	\|D^2h\|_{L^{\infty}(B_3)}^{\gamma} 
	&\le \frac{c}{|B_\frac{7}{2}|} \| D^2 h\|_{L^{\gamma}(B_\frac{7}{2})}^{\gamma}   \leq cN_0^{\gamma}. 
	\end{align}
	
	The weak 1-1 estimate \eqref{weak1-1} shows
	\begin{align*}
	&|\{ x\in B_1\,:\, \mathcal{M}_{B_3}(|D^2(u-h)|^\gamma)(x) > N_0^{\gamma}\}|\\
	&\le\frac{c}{N_0^{\gamma}}  \|D^2(u-h)\|_{L^{\gamma}(B_3)}^{\gamma}  \leq \frac{c}{N_0^{\gamma}}  \|D^2(u-h)\|_{L^{\gamma}(B_\frac{7}{2})}^{\gamma}   \le c N_0^{-\gamma}\delta^{\gamma}=\varepsilon |B_1|
	\end{align*}
	from \eqref{comparison}.
	Here, we choose $\delta = \delta( n, \gamma, \mu, c_q, q, \varepsilon)>0$ so that the last equality in the above holds. 
	To conclude \eqref{key}, we only need to show 
	$$
	\{x\in B_1\, :\, \mathcal{M}(|D^2u|^{\gamma})(x)>N_1^\gamma \} \subset \{x\in B_1\,:\, \mathcal{M}_{B_3}(|D^2(u-h)|^{\gamma})(x) >N_0^{\gamma} \}
	$$
	where $N_1:= \max\{ (2N_0)^{\gamma}, 2^n\}$.
	For this, taking $x_1 \in B_1$ such that 
	\begin{equation} \label{V(u-h)}
	\mathcal{M}_{B_3}(|D^2(u-h)|^{\gamma})(x_1) \le N_0^{\gamma},
	\end{equation}
	i.e. 
	\begin{equation*}
	\frac{1}{|B_\rho|} \int_{B_{\rho}(x_1)\cap B_3} |D^2(u-h)(y)|^{\gamma} dy  \leq N_0^{\gamma} \quad \text{for any  }  \rho>0 ,
	\end{equation*}
	we claim that 
	\begin{equation}\label{claim}
	\sup_{\tilde{\rho}>0}\, \dashint_{B_{\widetilde{\rho}}(x_1)} |D^2u(y)|^{\gamma} dy \leq N_1^{\gamma} 
	\end{equation}
	where $N_1=\max\{ (2N_0)^{\gamma}, 2^n\}$.  

	For $\widetilde{\rho}>2$, we see that $x_0 \in B_1 \subset B_{\widetilde{\rho}} (x_1) \subset B_{2\widetilde{\rho}}(x_0)$
	and then it follows that
	\begin{align*} 
	\dashint_{B_{\widetilde{\rho}}(x_1)} |D^2u(y)|^{\gamma} dy 
	&\leq  2^{n}\, \dashint_{B_{2{\widetilde{\rho}}}(x_0)} |D^2u(y)|^{\gamma} dy \leq 2^{n}
	\end{align*}
	by \eqref{con1}.
	Meanwhile for the case $\widetilde{\rho}\leq 2$, 
	we note that $B_{\widetilde{\rho}}(x_1) \subset B_3$. 
	Thus we get
	\begin{align*}
	&\sup_{\widetilde{\rho}\leq 2}\, \dashint_{B_{\widetilde{\rho}}(x_1)} |D^2u(y)|^{\gamma} dy\\ 
	& \qquad \leq 2^{\gamma-1}  \sup_{\widetilde{\rho}\leq 2}\, \left(  \dashint_{B_{\widetilde{\rho}}(x_1)} |D^2(u-h)(y)|^{\gamma} dy + \dashint_{B_{\widetilde{\rho}}(x_1)} |D^2h(y)|^{\gamma} dy \right) \\
	&\qquad \le 2^{\gamma-1} \left( \mathcal{M}_{B_3}(|D^2(u-h)|^{\gamma})(x_1)+ \sup_{ y\in B_3} |D^2h(y)|^{\gamma}\right)  \\
	&\qquad \le  (2N_0)^{\gamma} 
	\end{align*}
	by using \eqref{V(u-h)} and \eqref{L infty estimate_Du}.
	Hence we obtain \eqref{claim}.
	
	Therefore, we conclude 
	\begin{align*}
	|\{x\in B_1\,:\, \mathcal{M}(|D^2u|^{\gamma})(x)>N_1^{\gamma} \}|
	&< |\{ x\in B_1\,:\, \mathcal{M}_{B_3}(|D^2(u-h)|^\gamma)(x) > N_0^{\gamma}\}|\\
	&\leq \varepsilon |B_1|
	\end{align*}
	as desired. 
\end{proof}

The following directly comes from the previous lemma via the scaling argument. 

\begin{lem}\label{lemma2}
	For any $\varepsilon>0$, $0<r<1$ and $y\in B_1$
	there exists a small
	$\delta = \delta( n, \gamma, \mu, q, c_q, \varepsilon)>0$
	so that
	for any solution $u \in W_V^{2, \gamma}(\Omega)$ of 
\[
	-a_{ij} D_{ij} u+Vu=f \ \ \text{ in } \Omega \supset B_{6r}(y)
	\]
	 assuming that $\textbf{A}=(a_{ij})$ is $(\delta, 6r)$-vanishing and
	$V\in \mathcal{RH}_q$ with some $q>n/2$, if $r^{-2-n/\gamma}\|u\|_{L^{\gamma}(B_{6r}(y))} \le \delta$ and
	\begin{equation*}
	\{x\in \Omega\,:\, \mathcal{M}(|D^2u|^{\gamma})(x)\leq 1 \} \cap \{x\in \Omega\, :\, \mathcal{M}(|f|^{\gamma})(x)\leq \delta^{\gamma}\} \cap B_r(y) \neq \emptyset
	\end{equation*}
	for $1<\gamma\leq q$,  
	then we have
	\[
	|\{x\in \Omega\,:\, \mathcal{M}(|D^2u|^{\gamma})(x)>N_1^{\gamma} \}\cap B_r(y) | < \varepsilon |B_r(y)|
	\]
	where $N_1$ is given in Lemma~\ref{keylemma}.
\end{lem}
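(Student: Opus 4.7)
The plan is a direct scaling reduction to Lemma~\ref{keylemma}. Given the solution $u$ of $-a_{ij}D_{ij}u+Vu=f$ on $\Omega\supset B_{6r}(y)$, I introduce the rescaled functions
\[
\tilde{u}(x):=\frac{u(rx+y)}{r^{2}}, \quad \tilde{a}_{ij}(x):=a_{ij}(rx+y), \quad \tilde{V}(x):=r^{2}V(rx+y), \quad \tilde{f}(x):=f(rx+y),
\]
defined on $\tilde{\Omega}:=\{x:rx+y\in\Omega\}\supset B_{6}$. A short computation using $D_{ij}\tilde{u}(x)=D_{ij}u(rx+y)$ shows that $\tilde{u}\in W^{2,\gamma}_{\tilde{V}}(\tilde{\Omega})$ solves $-\tilde{a}_{ij}D_{ij}\tilde{u}+\tilde{V}\tilde{u}=\tilde{f}$ on $\tilde{\Omega}$.

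Next, I verify that all hypotheses of Lemma~\ref{keylemma} transfer to $\tilde{u}$. First, a change of variables in \eqref{delta-vanishing} shows that the $(\delta,6r)$-vanishing of $\mathbf{A}$ is equivalent to the $(\delta,6)$-vanishing of $\tilde{\mathbf{A}}$. Second, the scaling invariance \eqref{invariant} yields $\tilde{V}\in\mathcal{RH}_{q}$ with the same constant $c_{q}$. Third, changing variable $w=rx+y$ in $\|u\|^{\gamma}_{L^{\gamma}(B_{6r}(y))}$ gives
\[
\|\tilde{u}\|_{L^{\gamma}(B_{6})}=r^{-2-n/\gamma}\|u\|_{L^{\gamma}(B_{6r}(y))}\le\delta,
\]
which is exactly the smallness assumption on the rescaled function.

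For the maximal function hypothesis, a straightforward change of variables gives
\[
\mathcal{M}(|D^{2}\tilde{u}|^{\gamma})(x)=\mathcal{M}(|D^{2}u|^{\gamma})(rx+y), \qquad \mathcal{M}(|\tilde{f}|^{\gamma})(x)=\mathcal{M}(|f|^{\gamma})(rx+y).
\]
Hence the nonempty intersection in $B_{r}(y)$ assumed in the hypothesis translates bijectively to a nonempty intersection in $B_{1}$ under $x\mapsto(x-y)/r$, so Lemma~\ref{keylemma} applies to $\tilde{u}$ and yields
\[
\bigl|\{x\in\tilde{\Omega}:\mathcal{M}(|D^{2}\tilde{u}|^{\gamma})(x)>N_{1}^{\gamma}\}\cap B_{1}\bigr|<\varepsilon|B_{1}|.
\]

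Finally, I transport this inequality back to the original coordinates. Using once more the identity for $\mathcal{M}(|D^{2}\tilde{u}|^{\gamma})$ and the Jacobian factor $r^{n}$ in the Lebesgue measure, the left-hand side equals $r^{-n}|\{w\in\Omega:\mathcal{M}(|D^{2}u|^{\gamma})(w)>N_{1}^{\gamma}\}\cap B_{r}(y)|$, while the right-hand side equals $r^{-n}\varepsilon|B_{r}(y)|$, giving precisely the claim. There is no real obstacle here; the only point requiring a moment of care is checking that each hypothesis is scale-covariant with the chosen normalization $\tilde{u}=r^{-2}u(r\cdot+y)$, in particular that the factor $r^{2}$ appearing in $\tilde{V}$ is exactly the factor preserving membership in $\mathcal{RH}_{q}$ via \eqref{invariant}, which is why this scaling is the natural one.
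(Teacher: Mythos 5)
Your proof is correct and is precisely the scaling argument the paper intends: the paper states that Lemma~\ref{lemma2} ``directly comes from the previous lemma via the scaling argument'' without writing out the details, and your normalization $\tilde{u}=r^{-2}u(r\cdot+y)$, $\tilde{V}=r^{2}V(r\cdot+y)$ together with the verification of each rescaled hypothesis (including the $L^{\gamma}$ smallness exponent $r^{-2-n/\gamma}$ and the use of \eqref{invariant}) is exactly what is needed. No gaps.
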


\begin{lem}\label{cor2}
Let $\omega \in A_{p}$ with $1< p<\infty$.
For any $\varepsilon_1>0$, $0<r<1$ and $y\in B_1$
there exists a small
$\delta = \delta( n, \gamma, \mu, q, c_q, p, [\omega]_p, \varepsilon_1)>0$
so that
for any solution $u \in W_V^{2, \gamma}(\Omega)$ of \eqref{second_order_ell} in $\Omega \supset B_6$
assuming that $\textbf{A}$ is $(\delta, 6)$-vanishing and $V\in \mathcal{RH}_q$ for some $q>n/2$, if $\|u\|_{L^{\gamma}(B_6)} \leq \delta$ and
	\begin{equation}\label{assump3}
	 w (\{x\in B_1\,:\, \mathcal{M}(|D^2u|^{\gamma})(x)>N_1^{\gamma} \}\cap B_r (y) ) \ge \varepsilon_1 w (B_r(y))
	\end{equation}
	then we have 
	\begin{equation}\label{result3}
	B_1 \cap B_r(y) \subset
	\{x\in B_1\,:\, \mathcal{M}(|D^2u|^{\gamma})(x)> 1 \} \cup \{x\in B_1\, :\, \mathcal{M}(|f|^{\gamma})(x)>\delta^{\gamma}\},
	\end{equation}
	where $N_1$ is given in Lemma~\ref{keylemma}.
\end{lem}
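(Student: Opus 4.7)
My plan is to prove the contrapositive: assume that the inclusion \eqref{result3} fails and derive a contradiction with \eqref{assump3}. Failure of \eqref{result3} means there exists a point
\[
x^{\ast}\in B_1\cap B_r(y)\setminus\bigl(\{\mathcal{M}(|D^2u|^{\gamma})>1\}\cup\{\mathcal{M}(|f|^{\gamma})>\delta^{\gamma}\}\bigr),
\]
so that simultaneously $\mathcal{M}(|D^2u|^{\gamma})(x^{\ast})\le 1$ and $\mathcal{M}(|f|^{\gamma})(x^{\ast})\le\delta^{\gamma}$. In particular the set
\[
\{\mathcal{M}(|D^2u|^{\gamma})\le 1\}\cap\{\mathcal{M}(|f|^{\gamma})\le\delta^{\gamma}\}\cap B_r(y)
\]
is nonempty, which is exactly the intersection hypothesis required by Lemma~\ref{lemma2}. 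Together with the standing assumption $\|u\|_{L^{\gamma}(B_6)}\le\delta$ (which, after possibly shrinking $\delta$, guarantees the rescaled norm condition $r^{-2-n/\gamma}\|u\|_{L^{\gamma}(B_{6r}(y))}\le\delta$ uniformly in $y\in B_1$ and $r\in(0,1)$), Lemma~\ref{lemma2} applies with any prescribed parameter $\varepsilon>0$ to yield
\[
\bigl|\{x\in\Omega:\mathcal{M}(|D^2u|^{\gamma})(x)>N_1^{\gamma}\}\cap B_r(y)\bigr|<\varepsilon\,|B_r(y)|.
\]

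Next I upgrade this Lebesgue-measure bound to a weighted bound via the Muckenhoupt self-improving property \eqref{measure comparison} of Lemma~\ref{property_A}. Writing $E:=\{\mathcal{M}(|D^2u|^{\gamma})>N_1^{\gamma}\}\cap B_r(y)\subset B_r(y)$, that property yields
\[
\frac{w(E)}{w(B_r(y))}\le c\Bigl(\frac{|E|}{|B_r(y)|}\Bigr)^{\sigma}\le c\,\varepsilon^{\sigma},
\]
for some $\sigma\in(0,1)$ and $c>0$ depending only on $n,p,[w]_p$. Choosing $\varepsilon=\varepsilon(n,p,[w]_p,\varepsilon_1)>0$ so small that $c\varepsilon^{\sigma}<\varepsilon_1$ fixes in turn $\delta=\delta(n,\gamma,\mu,q,c_q,p,[w]_p,\varepsilon_1)>0$ via Lemma~\ref{lemma2}, and gives
\[
w(E)<\varepsilon_1\,w(B_r(y)),
\]
which directly contradicts \eqref{assump3}. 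This completes the argument.

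The main obstacle I anticipate is the bookkeeping on the $L^{\gamma}$-norm hypothesis when invoking Lemma~\ref{lemma2}: the scaled condition $r^{-2-n/\gamma}\|u\|_{L^{\gamma}(B_{6r}(y))}\le\delta$ must be guaranteed uniformly in the parameters $r\in(0,1)$ and $y\in B_1$. Everything else is essentially a soft passage from the unweighted decay estimate of Lemma~\ref{lemma2} to its weighted counterpart via the reverse-doubling property of Muckenhoupt weights, and the only nontrivial quantitative step is the choice $c\varepsilon^{\sigma}<\varepsilon_1$.
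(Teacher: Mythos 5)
Your overall strategy --- argue by contradiction, feed the resulting good point into Lemma~\ref{lemma2}, and then upgrade the Lebesgue-measure decay to a weighted one via \eqref{measure comparison} --- is exactly the paper's. The gap sits precisely at the point you yourself flagged as ``the main obstacle'': the hypothesis $r^{-2-n/\gamma}\|u\|_{L^{\gamma}(B_{6r}(y))}\le\delta$ of Lemma~\ref{lemma2} cannot be obtained from $\|u\|_{L^{\gamma}(B_6)}\le\delta$ ``uniformly in $r\in(0,1)$ after possibly shrinking $\delta$.'' The best available bound is $r^{-2-n/\gamma}\|u\|_{L^{\gamma}(B_{6r}(y))}\le r^{-2-n/\gamma}\,\delta$, and $r^{-2-n/\gamma}\to\infty$ as $r\to 0$, so no fixed smaller $\delta'$ works for all $r$; taking $\delta'=r^{2+n/\gamma}\delta$ for the given $r$ would make $\delta$ depend on $r$, contradicting the stated parameter list and breaking the later use of this lemma in Lemma~\ref{iteration}, where the covering lemma needs the implication for every $r\in(0,1]$ with a single $\delta$. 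A second, smaller defect of applying Lemma~\ref{lemma2} directly at scale $r$ centered at $y$ is that it requires the equation to hold on $B_{6r}(y)$, which need not be contained in $B_6\subset\Omega$ when $r$ and $|y|$ are close to $1$.

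The paper avoids both issues by rescaling rather than applying Lemma~\ref{lemma2} in the original variables: setting $v(z)=R^2u\big(\frac{z-y^{\ast}}{R}\big)$, $g(z)=f\big(\frac{z-y^{\ast}}{R}\big)$, and so on, the dilation maps $B_6$ onto $B_{6R}(y^{\ast})$, so the required norm hypothesis becomes exactly $R^{-2-n/\gamma}\|v\|_{L^{\gamma}(B_{6R}(y^{\ast}))}=\|u\|_{L^{\gamma}(B_6)}\le\delta$ and the domain containment is automatic. The price is that Lemma~\ref{lemma2} then yields the decay estimate on $B_R(y^{\ast})$, i.e.\ on the image of $B_1$ rather than of $B_r(y)$; translating back, the density of the bad set in $B_r(y)$ is only controlled by $\varepsilon_0/r^{n}$, and the resulting factor $r^{-n\sigma}$ is absorbed into the choice of the auxiliary parameter $\varepsilon_0$ before concluding with \eqref{measure comparison} exactly as you do. You need to supply this rescaling step (or some other mechanism for verifying the scaled norm hypothesis); as written, the invocation of Lemma~\ref{lemma2} does not go through.
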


\begin{proof}
	Let us assume that \eqref{result3} is false under \eqref{assump3}. 
	Namely, for any $y\in B_1$ and $0<r<1$, there exists $x_0 \in B_1 \cap B_r(y) $ satisfying
	\begin{equation}\label{assump3-1} 
	\sup_{\rho>0}\frac{1}{|B_\rho|}\int_{B_{\rho}(x_0)\cap \Omega} |D^2u|^{\gamma}(x)\, dx \leq 1 
	\end{equation}
	and  
	\begin{equation}\label{assump3-2}
	\sup_{\rho>0}\frac{1}{|B_\rho|}\int_{B_{\rho}(x_0)\cap \Omega}  |f|^{\gamma}(x)\, dx \leq \delta^{\gamma} 
	\end{equation}
	where $\Omega \supset B_6$.

We will use the scaling argument. 
For fixed $y^{\ast} \in B_1$ and $0<R<1$, 
we let 
\[
\begin{aligned}
&\quad v(z)=R^2u\big(\frac{z-y^{\ast}}{R}\big), \quad g(z)=f\big(\frac{z-y^{\ast}}{R}\big),\\
&\widetilde{a}_{ij}(z)=a_{ij}\big(\frac{z-y^{\ast}}{R}\big)\quad \text{and} \quad \widetilde{V}(z)=\frac1{R^2} V\big(\frac{z-y^{\ast}}{R}\big) .
\end{aligned}
\] 
Then $v$ is a solution to
\begin{equation*}
	-\widetilde{a}_{ij}D_{ij}v+\widetilde{V}v=g \ \ \text{ in } \tilde{\Omega} \supset B_{6R}(y^{\ast})
\end{equation*}
where $\tilde{\Omega} =\{z= Rx+y^{\ast}\,:\, x\in \Omega\}$. 
Here, one can check that the coefficients $\widetilde{a}_{ij}$ and $\widetilde{V}$ satisfy the assumptions given as in Lemma \ref{lemma2}, thanks to \eqref{invariant}.
Also, we see that $$ R^{-2-\frac{n}{\gamma}}\| v \|_{L^\gamma(B_{6R}(y^{\ast}))} = \|u\|_{L^{\gamma}(B_{6})}\le \delta.$$ 
Moreover, 
we set 
$$
z_0 :=Rx_0+y^{\ast}.
$$
Then $z_0 \in B_{R}(y^{\ast})\cap B_{Rr}(Ry+y^{\ast}) \subseteq B_{R}(y^{\ast})$ for $x_0 \in B_1 \cap B_r(y) $
and it follows that
\begin{equation*}
	\sup_{\rho>0}\frac{1}{|B_\rho|}\int_{B_{\rho}(z_0)\cap \widetilde{\Omega}} |D^2v|^{\gamma}(z) dz 
	=	\sup_{\rho>0}\frac{1}{|B_{\frac{\rho}{R}}|}\int_{B_{\frac{\rho}{R}}(x_0)\cap \Omega} |D^2u|^{\gamma}(x) dx \leq 1 
\end{equation*}
and 
\begin{equation*}
	\sup_{\rho>0}\frac{1}{|B_\rho|}\int_{B_{\rho}(z_0)\cap \tilde{\Omega}}  |f|^{\gamma}(z) dz \leq \delta^{\gamma}
\end{equation*}
from \eqref{assump3-1} and \eqref{assump3-2} respectively.
All assumptions given in Lemma \ref{lemma2} are satisfied and thus
we use Lemma \ref{lemma2}
 to obtain
\[
\begin{aligned}
|\{z\in \widetilde{\Omega}\,:\, \mathcal{M}(|D^2v|^{\gamma})(z)>N_1^{\gamma} \}\cap B_R(y^{\ast}) | < \varepsilon_0 |B_R(y^{\ast})|
\end{aligned}
\]
for any $\varepsilon_0>0$.
Then we derive
\[
\begin{aligned}
&\frac{1}{|B_{r}(y)|} |\{x\in B_1\,:\, \mathcal{M}(|D^2u|^{\gamma})(x)>N_1^{\gamma} \}\cap B_{r}(y) |
\\
& \qquad = \frac{1}{|B_{r}(y)|} \int_{\{x\in B_1\cap B_r(y)\,:\, \mathcal{M}(|D^2u|^{\gamma})(x)>N_1^{\gamma} \} }1 \,dx\\
& \qquad\le \frac{1}{|B_{r}(y)|} \int_{\{x\in \Omega \cap B_1 \,:\, \mathcal{M}(|D^2u|^{\gamma})(x)>N_1^{\gamma} \}} 1 \,dx\\
& \qquad = \frac{1}{|B_{r}(y)|} \int_{\{z\in \widetilde{\Omega} \cap B_R(y^{\ast}) \,:\, \mathcal{M}(|D^2v|^{\gamma})(z)>N_1^{\gamma} \} } \frac1{R^n} \,dz\\
& \qquad = \frac1{r^n} \frac{1}{|B_{R}(y^{\ast})|} \int_{\{z\in \widetilde{\Omega} \cap B_R(y^{\ast}) \,:\, \mathcal{M}(|D^2v|^{\gamma})(z)>N_1^{\gamma} \} }  \,dz\\
& \qquad = \frac1{r^n} \frac{1}{|B_{R}(y^{\ast})|} |\{z\in \widetilde{\Omega} \cap B_R(y^{\ast}) \,:\, \mathcal{M}(|D^2v|^{\gamma})(z)>N_1^{\gamma} \} | < \frac{1}{r^n}\varepsilon_0.
\end{aligned}
\]
	By applying Lemma \ref{property_A} to this, we get
	\begin{align*}
	&\frac{\omega(\{x\in \Omega\,:\, \mathcal{M}(|D^2u|^{\gamma})(x)>N_1^{\gamma} \}\cap B_r(y))}{\omega(B_r(y))} \\
	&\qquad \leq c \left( \frac{|\{x\in \Omega\,:\, \mathcal{M}(|D^2u|^{\gamma})(x)>N_1^{\gamma} \}\cap B_r(y)|}{|B_r(y)|} \right)^{\sigma} \\
	&\qquad <\frac{c\varepsilon_0^{\sigma}}{r^{n\sigma}} < \varepsilon_1
	\end{align*}
where $ \sigma \in (0,1)$ is given in Lemma~\ref{property_A}, by taking $\varepsilon_0$ so that the last inequality holds.
But it is a contradiction to \eqref{assump3}.
Hence the proof is done.
\end{proof}

\section{Proof of main result}\label{se5}

\subsection{Covering Lemma}
The modified covering lemma in the weighted setting is shown in \cite[Lemma~3.3]{BL15} for a parabolic case. The proof of the elliptic case is quite similar to that of the parabolic one, but we give it for the completeness of this paper.
\begin{lem}\label{covering}
Let $w \in A_{p}$, $1<p<\infty$ and let $0<\varepsilon <1$.
Suppose $E$ and $F$ are measurable sets with $E \subset F \subset B_1$ such that
\begin{itemize}
	\item[(1)] $w(E)<\varepsilon w(B_1)$
	\item[(2)] for any $y\in B_1$ and $r\in (0,1]$,
	$$
	w(E \cap B_r(y)) \ge \varepsilon w(B_r(y)) \quad \text{implies} \quad B_r(y)\cap B_1 \subset F.
	$$
\end{itemize} 
Then $w(E) \leq c\varepsilon [w]^2_{p} 10^{np}  w(F)$. 
\end{lem}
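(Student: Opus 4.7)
The plan is to prove this via the classical modified Vitali/stopping-time argument, adapted to the weighted setting. For each $x \in E$, I will select a critical ball at which the weighted density of $E$ equals the threshold $\varepsilon$, use hypothesis~(2) to contain those balls in $F$, and then extract a disjoint Vitali subcollection with a $5$-enlargement covering of $E$.

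For $w$-a.e.\ $x \in E$, define
\[
r_x := \sup\bigl\{ r \in (0,1] : w(E \cap B_r(x)) \geq \varepsilon w(B_r(x)) \bigr\}.
\]
Since $w \in A_p$ makes $d\mu = w\,dx$ a doubling measure, the weighted Lebesgue differentiation theorem forces the density ratio to tend to $1$ as $r \to 0^+$, so $r_x > 0$ a.e. Continuity in $r$ gives $w(E \cap B_{r_x}(x)) \geq \varepsilon w(B_{r_x}(x))$, and hypothesis~(2) then yields $B_{r_x}(x) \cap B_1 \subset F$. Applying the Vitali $5$-covering lemma to $\{B_{r_x}(x)\}_{x \in E}$ produces a disjoint subfamily $\{B_{r_i}(x_i)\}_{i\geq 1}$ with $x_i \in B_1$, $r_i \in (0,1]$, and $E \subset \bigcup_i B_{5 r_i}(x_i)$ up to a $w$-null set.

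The central inequality to establish is
\[
w(E \cap B_{5 r_i}(x_i)) \leq c\,[w]_p\, 10^{np}\,\varepsilon\, w(B_{r_i}(x_i)).
\]
I split into two cases. If $5 r_i \leq 1$, maximality of $r_i$ gives $w(E \cap B_{5 r_i}(x_i)) \leq \varepsilon w(B_{5 r_i}(x_i))$, and Lemma~\ref{property_A}(3) applied to $B_{r_i}(x_i) \subset B_{5 r_i}(x_i)$ gives the doubling bound $w(B_{5 r_i}(x_i)) \leq [w]_p 5^{np} w(B_{r_i}(x_i))$. If instead $5 r_i > 1$, so $r_i > 1/5$, then hypothesis~(1) and $E \subset B_1$ force $w(E \cap B_{5 r_i}(x_i)) \leq w(E) < \varepsilon w(B_1)$; since both $B_1$ and $B_{r_i}(x_i)$ sit inside $B_2$, Lemma~\ref{property_A}(3) applied to $B_{r_i}(x_i) \subset B_2$ yields $w(B_1) \leq w(B_2) \leq [w]_p (2/r_i)^{np} w(B_{r_i}(x_i)) \leq [w]_p 10^{np} w(B_{r_i}(x_i))$.

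Finally, I convert $w(B_{r_i}(x_i))$ to $w(F \cap B_{r_i}(x_i))$ using hypothesis~(2) once more: since $F \subset B_1$ and $B_{r_i}(x_i) \cap B_1 \subset F$, one has $F \cap B_{r_i}(x_i) = B_{r_i}(x_i) \cap B_1$, and the elementary geometric bound $|B_{r_i}(x_i) \cap B_1| \geq 2^{-n}|B_{r_i}(x_i)|$ (valid for $x_i \in B_1$, $r_i \in (0,1]$) fed into Lemma~\ref{property_A}(3) provides $w(B_{r_i}(x_i)) \leq c\,[w]_p\, w(F \cap B_{r_i}(x_i))$. Summing over $i$ and using disjointness closes the argument:
\[
w(E) \leq \sum_i w(E \cap B_{5 r_i}(x_i)) \leq c\,[w]_p^2\, 10^{np}\,\varepsilon \sum_i w(F \cap B_{r_i}(x_i)) \leq c\,[w]_p^2\, 10^{np}\,\varepsilon\, w(F).
\]
The main obstacle is the case $5 r_i > 1$, where the maximality of $r_i$ delivers nothing (hypothesis~(2) is restricted to $r \leq 1$); the fix is to absorb the loss by invoking the global hypothesis~(1) and $A_p$-doubling between $B_1$ and $B_{r_i}(x_i)$, which is precisely where the constant $10^{np}$ in the stated bound originates.
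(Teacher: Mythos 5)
Your proof is correct and follows essentially the same modified Vitali covering argument as the paper: a stopping-time radius at which the weighted density of $E$ reaches $\varepsilon$, hypothesis (2) to place $B_{r_i}(x_i)\cap B_1$ inside $F$, the $5$-covering lemma, and Lemma~\ref{property_A}(3) to compare $w(B_{5r_i}(x_i))$, $w(B_{r_i}(x_i))$ and $w(B_{r_i}(x_i)\cap B_1)$. Your explicit case analysis for $5r_i>1$ (falling back on hypothesis (1) and doubling between $B_{r_i}(x_i)$ and $B_2$) is in fact a careful patch of a point the paper's own proof glosses over, namely that the stopping radius need not lie in $(0,1]$ where hypothesis (2) applies.
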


\begin{proof}
Since $w(E) <\varepsilon w(B_1)$, there exists a small $r(x)>0$ for almost all $x\in E$ such that
$$
w(E \cap B_{r(x)}(x)) =\varepsilon w(B_{r(x)}(x)) $$
and 
\begin{equation*}
w(E \cap B_{r}(x)) <\varepsilon w(B_{r}(x)) \qquad \text{for } r>r(x).
\end{equation*}

By using the Vitali covering lemma, 
there is $\{ x_k\}_{k\in \mathbb{N}} \subseteq E$ so that $B_{r_k}(x_k)$ are mutually disjoint and 
$$E \subset \bigcup_{k\in \mathbb{N}} B_{5r_k}(x_k) \cap B_1$$
with $r_k=r(x_k)$ given in the above.
Then we see that
\begin{align}\label{mes1}
w(E \cap B_{5r_k}(x_k)) 
&< \varepsilon w( B_{5r_k}(x_k)) \nonumber\\
&< \varepsilon [w]_{p} 5^{np} w( B_{r_k}(x_k) ) 
\end{align}
by \eqref{measure comparison}.
Also, along with the fact 
$$
\sup_{0<r<1} \sup_{x\in B_1} \frac{|B_r(x)|}{|B_r(x) \cap B_1|} \leq 2^n
$$
we have 
\begin{equation}\label{mes2}
w(B_{r_k}(x_k))  \leq [w]_p 2^{np} w(B_{r_k}(x_k) \cap B_1).
\end{equation} 

Hence by combining \eqref{mes1} and \eqref{mes2} we conclude that
\begin{align*}
w(E) = w\big(E \cap \bigcup_k B_{5{r_k}}(x_k)\big) 
& \leq \sum_k  w(E \cap B_{5{r_k}}(x_k))\\
& \leq  \varepsilon [w]_{p} 5^{np} \sum_k  w( B_{r_k}(x_k) )\\
& \leq  c\varepsilon [w]^2_{p} 5^{np} 2^{np} \sum_k  w( B_{r_k}(x_k) \cap B_1 )\\
& \leq  c\varepsilon [w]^2_{p} 10^{np} w\big( \bigcup_k B_{r_k}(x_k) \cap B_1 \big)\\
& \leq  c\varepsilon [w]^2_{p} 10^{np} w( F )
\end{align*} 
from the condition $(2)$.
\end{proof}

\begin{lem}\label{iteration}
Under the same assumptions in Lemma \ref{cor2} with $0<\varepsilon_1<1$, if
\begin{equation}\label{con3}
w(\{ x\in B_1 : \mathcal{M}(|D^2u|^{\gamma})(x)>N_1^{\gamma} \}) <\varepsilon_1 w(B_1),
\end{equation}
then we have 
\begin{align}\label{decay}
&w(\{x\in B_1\,:\, \mathcal{M}(|D^2u|^{\gamma})(x)>(N_1^\gamma)^k \}) \nonumber\\
&\qquad \leq \sum_{i=1}^k \varepsilon_2^i w( \{x\in B_1\,:\, \mathcal{M}(|f|^{\gamma})(x)>\delta^{\gamma}(N_1^{\gamma})^{k-i} \} )\nonumber\\
&\qquad \quad +\varepsilon_2^k w(\{x\in B_1\,:\, \mathcal{M}(|D^2u|^{\gamma})(x)>1 \})
\end{align}
for $k\in \mathbb{N}$ where $\varepsilon_2:=  [w]^2_{p} 10^{np} \varepsilon_1$ and $N_1$ is given in Lemma~\ref{keylemma}. 
\end{lem}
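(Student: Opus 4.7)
I would prove \eqref{decay} by induction on $k\in\mathbb{N}$, with the base case $k=1$ obtained from a single application of the covering lemma (Lemma~\ref{covering}) together with Lemma~\ref{cor2}, and the inductive step carried out by the linear rescaling $u\mapsto u/N_1^k$, which transfers the statement at level $(N_1^{\gamma})^{k+1}$ back to the base case at level $N_1^{\gamma}$. Throughout the argument it is convenient to abbreviate
\[
A_j := \{x\in B_1 : \mathcal{M}(|D^2u|^{\gamma})(x) > (N_1^{\gamma})^{j}\},\qquad B_j := \{x\in B_1 : \mathcal{M}(|f|^{\gamma})(x) > \delta^{\gamma}(N_1^{\gamma})^{j}\},
\]
and to note that the sequences $A_j$ and $B_j$ are monotonically decreasing in $j$.

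\textbf{Base case $k=1$.} Set
\begin{align*}
E &:= A_1, \\
F &:= A_0 \cup B_0 = \{\mathcal{M}(|D^2u|^{\gamma})>1\}\cap B_1\ \cup\ \{\mathcal{M}(|f|^{\gamma})>\delta^{\gamma}\}\cap B_1.
\end{align*}
Condition~(1) of Lemma~\ref{covering} is precisely the standing hypothesis~\eqref{con3}. For condition~(2), if $w(E\cap B_r(y))\ge \varepsilon_1 w(B_r(y))$ for some $y\in B_1$ and $r\in(0,1]$, then Lemma~\ref{cor2} (applied with $\omega$ in the role of the weight $w$) guarantees $B_1\cap B_r(y)\subset F$. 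The covering lemma then yields $w(A_1)\le \varepsilon_2\bigl(w(A_0)+w(B_0)\bigr)$ after absorbing the universal constant of Lemma~\ref{covering} into $\varepsilon_2$, which is exactly~\eqref{decay} for $k=1$.

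\textbf{Inductive step.} Assume \eqref{decay} for some $k\ge 1$ and set $\tilde u := u/N_1^{k}$, $\tilde f := f/N_1^{k}$. By linearity $\tilde u$ solves $-a_{ij}D_{ij}\tilde u + V\tilde u = \tilde f$, and since $N_1>1$ we still have $\|\tilde u\|_{L^{\gamma}(B_6)}\le \delta$, so Lemma~\ref{cor2} remains applicable to $\tilde u$. The hypothesis~\eqref{con3} for $\tilde u$ becomes $w(A_{k+1})<\varepsilon_1 w(B_1)$, which holds because $A_{k+1}\subset A_1$ and \eqref{con3} is assumed for $u$. Applying the base case to $\tilde u$ and using $\mathcal{M}(|D^2\tilde u|^{\gamma}) = \mathcal{M}(|D^2u|^{\gamma})/N_1^{\gamma k}$ and $\mathcal{M}(|\tilde f|^{\gamma})=\mathcal{M}(|f|^{\gamma})/N_1^{\gamma k}$ gives
\[
w(A_{k+1}) \le \varepsilon_2\, w(A_k) + \varepsilon_2\, w(B_k).
\]
Inserting the induction hypothesis $w(A_k)\le \sum_{i=1}^{k}\varepsilon_2^{i} w(B_{k-i}) + \varepsilon_2^{k} w(A_0)$ into the first term on the right and reindexing via $i\mapsto i+1$ collapses the two contributions into a single sum running from $1$ to $k+1$, producing exactly \eqref{decay} at level $k+1$.

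\textbf{Main obstacle.} The argument is structurally routine once the rescaling idea is in place; the only subtlety is checking that the rescaled function $\tilde u$ truly satisfies \emph{both} the smallness hypothesis on $\|\tilde u\|_{L^{\gamma}(B_6)}$ and the level-set hypothesis \eqref{con3} required by Lemma~\ref{cor2}. Both survive because $N_1>1$ and because the super-level sets $A_j$ shrink with $j$, so no adjustment of $\delta$ or $\varepsilon_1$ between induction steps is needed. The bookkeeping of the geometric sum in the induction step is then purely mechanical.
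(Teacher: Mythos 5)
Your proof is correct and follows essentially the same route as the paper: the base case $k=1$ is the covering lemma applied to $E=A_1$, $F=A_0\cup B_0$ with condition (2) supplied by Lemma~\ref{cor2}, and the higher levels are obtained by rescaling $u\mapsto N_1^{-k}u$; the paper phrases the iteration as "repeating the process $k-2$ times" where you package it as a formal induction, which is the same argument. Your explicit checks that the rescaled function still satisfies the smallness and level-set hypotheses, and your remark about absorbing the constant of Lemma~\ref{covering} into $\varepsilon_2$, are if anything slightly more careful than the paper's write-up.
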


\begin{proof}
	We shall apply Lemma \ref{covering} with  
	$$
	E=\{x\in B_1\,:\, \mathcal{M}(|D^2u|^{\gamma})(x)>N_1^\gamma \}
	$$
	and 
	$$
	F =\{x\in B_1\,:\, \mathcal{M}(|D^2u|^{\gamma})(x)> 1 \} \cup \{x\in B_1\, :\, \mathcal{M}(|f|^{\gamma})(x)> \delta^{\gamma}\}.
	$$
	
	It is easy to check that $E \subset F \subset B_1$ since $N_1 >1$.
	The first hypothesis stated in Lemma \ref{covering} is satisfied by \eqref{con3},
	and the second hypothesis is satisfied from Lemma \ref{cor2}.
	Thus by applying Lemma \ref{covering}, 
	we obtain 
	\begin{align*}
	&w\left(\{x\in B_1\,:\, \mathcal{M}(|D^2u|^{\gamma})(x)>N_1^\gamma \} \right) \\
	&\qquad \leq \varepsilon_2 w(\{x\in B_1\,:\, \mathcal{M}(|D^2u|^{\gamma})(x)> 1 \})\\
	&\qquad \quad + \varepsilon_2 w( \{x\in B_1\, :\, \mathcal{M}(|f|^{\gamma})(x)> \delta^{\gamma}\})
	\end{align*}
	with $\varepsilon_2 :=[w]^2_{p} 10^{np} \varepsilon_1$.
	Hence we get \eqref{decay} when $k=1$.
	
	For the case $k\ge2$, 
	we note 
	$$
	-a_{ij}D_{ij}(N_1^{-(k-1)}u) +V(N_1^{-(k-1)}u) =N_1^{-(k-1)}f
	$$
	from which the similar process as above works with $u$ and $f$ replaced by $N_1^{-(k-1)}u$ and $N_1^{-(k-1)}f $ respectively.
	Thus it follows that
	\begin{align}\label{decayk}
	&w(\{x\in B_1\,:\, \mathcal{M}(|D^2u|^{\gamma})(x)>(N_1^\gamma)^k \})\nonumber \\
	&\qquad \leq\, \varepsilon_2 w(\{x\in B_1\,:\, \mathcal{M}(|f|^{\gamma})(x)> (N_1^{\gamma})^{k-1}\delta^{\gamma} \}) \nonumber\\
	&\qquad \quad +\varepsilon_2 w(\{x\in B_1\,:\, \mathcal{M}(|D^2u|^{\gamma})(x)>(N_1^{\gamma})^{k-1}\})
	\end{align}
	from the fact that
	$$
	\{x\in B_1\,:\, \mathcal{M}(|D^2(N_1^{-(k-1)}u)|^{\gamma})(x)> N_1^{\gamma} \} =
	\{x\in B_1\,:\, \mathcal{M}(|D^2u|^{\gamma})(x)>N_1^{k \gamma}  \} 
	$$
	and, similarly,
	$$
	\{x\in B_1\,:\, \mathcal{M}(|N_1^{-(k-1)}f|^{\gamma})(x)>\delta^{\gamma} \}
	=\{x\in B_1\,:\, \mathcal{M}(|f|^{\gamma})(x)>(N_1^{k-1}\delta)^{\gamma} \}.
	$$
	We deal with the last term in \eqref{decayk}
	by using the same procedure with $u$ and $f$ replaced by $N_1^{k-2}u$ and $N_1^{k-2}f$ 
	so that \eqref{decayk} becomes
	\begin{align*}
	&w(\{x\in B_1\,:\, \mathcal{M}(|D^2u|^{\gamma})(x)>(N_1^\gamma)^k \}) \nonumber \\
	&\qquad \leq \sum_{i=1,2} \varepsilon_2^i w(\{x\in B_1\,:\, \mathcal{M}(|f|^{\gamma})(x)>\delta^{\gamma}(N_1^{\gamma})^{k-i} \})\nonumber\\
	&\qquad \quad +\varepsilon_2^2 w(\{x\in B_1\,:\, \mathcal{M}(|D^2u|^{\gamma})(x)>(N_1^{\gamma})^{k-2} \}).
	\end{align*}
	In this way, we repeat the process $k-2$ times.
	Hence we obtain \eqref{decay} as desired.
\end{proof}

\subsection{Proof of Theorem~\ref{mainresult}}

Now we are ready to prove our main theorem.
Let $1<\gamma \leq  \min\{ \tilde{p}, q\}$ where $\tilde{p}$ is given in Lemma~\ref{Phi-integrability_f}.

For any solution $u$ of \eqref{second_order_ell}
we first see that 
\begin{align*}
\| Vu\|_{W_w^{2,\Phi}(B_{1})} 
&=\|f+a_{ij}D_{ij}u\|_{L_w^\Phi(B_{1})}\nonumber\\
&\leq \|f\|_{L_w^\Phi(B_{1})} +\|a_{ij}\|_{L^{\infty}(B_1)} \|D^2u\|_{L_w^\Phi(B_{1})} \nonumber\\
&\leq \|f\|_{L_w^\Phi(B_1)} + c\|D^2u\|_{L_w^\Phi(B_{1})}
\end{align*}
since $a_{ij}\in L^{\infty}$ from the symmetry and uniform ellipticity conditions.
Thus it is sufficient to show that 
\begin{equation}\label{mainclaim_a}
\|D^2u\|_{L_w^\Phi(B_{1})}\leq c(\|f\|_{L_w^\Phi(B_6)} +\|u\|_{L^{\gamma}(B_6)}).
\end{equation}
Without loss of generality, let us assume that
\begin{equation}\label{as1}
\|f\|_{L_w^{\Phi}(B_6)} \leq \delta \quad \textrm{and} \quad \|u\|_{L^{\gamma}(B_6)} \leq \delta
\end{equation}
for small $\delta>0$ chosen later.
Indeed, we consider
$$
\tilde{u}:=\frac{\delta u }{ \|f\|_{L_w^\Phi(B_6)} +\|u\|_{L^\gamma(B_6)}}\quad \text{and} \quad \tilde{f}= \frac{\delta f}{\|f\|_{L_w^\Phi(B_6)} +\|u\|_{L^\gamma(B_6)}},
$$
instead of $u$ and $f$ respectively. It is clear that $\tilde{u}$ solves the equation \eqref{second_order_ell} with $\tilde{f}$ instead of $f$.
Letting 
$
M:= \|f\|_{L_w^\Phi(B_6)} +\|u\|_{L^\gamma(B_6)},
$
we have 
\[
\|\tilde{f}\|_{L_w^{\Phi}(B_6)}  = \frac{\delta}{M}  \| f\|_{L_w^{\Phi}(B_6)} \le \delta
\]
and
\[\|\tilde u\|_{L^{\gamma}(B_6)}  =\frac{\delta}{M}  \|u\|_{L^\gamma(B_6)} \le  \delta
\]
from which the assumptions in \eqref{as1} hold true. 
Thus \eqref{mainclaim_a} turns to showing
\begin{equation}\label{main_claim}
\|D^2u\|_{L_w^{\Phi}(B_{1})}\leq c
\end{equation}
under \eqref{as1}.

From Lemma~\ref{Phi-integrability_f}, we have
\[
 \|f\|_{L^{\gamma}(B_6)}  \le c \|f\|_{L^{\tilde{p}}(B_6)}  \le c \|f\|_{L_w^\Phi (B_6)}  \le c\,\delta
\]
for some constant $c = c(n,q, i(\Phi), [w]_{i(\Phi)})>0$.
Then by using \eqref{measure comparison}, \eqref{weak1-1} and applying \cite[Theorem~13]{BBHV},
it follows that
\begin{align*}
&\frac{w( \{ x\in B_1\,:\, \mathcal{M}(|D^2u|^{\gamma})(x)>N_1^{\gamma} \} ) }{w(B_1)}\nonumber\\
&\leq c\left( \frac{ |\{ x\in B_1\,:\, \mathcal{M}(|D^2u|^{\gamma})(x)>N_1^{\gamma} \}|}{|B_1|}\right)^{\sigma} \leq c \left( \frac{ \|D^2u\|_{L^{\gamma}(B_1)}^{\gamma} }{N_1^{\gamma}|B_1|}\right)^{\sigma}\nonumber\\
&\leq c \left( \frac{ \|f\|_{L^{\gamma}(B_6)}^{\gamma}+ \|u\|_{L^{\gamma}(B_6)}^{\gamma} }{N_1^{\gamma}|B_1|}\right)^{\sigma} 
\le c \left(\frac{\delta^{\gamma}}{{N_1^{\gamma}|B_1|}}\right)^{\sigma} 
\end{align*}
under \eqref{as1}.
Here, a constant $c$ depends on $n, \mu, q, c_q, i(\Phi), [w]_{i(\Phi)}$, and $N_1$ is given in Lemma~\ref{keylemma}.
For any $\varepsilon_1>0,$
we now take a small $\delta = \delta(n, \mu, q, c_q, i(\Phi), [w]_{i(\Phi)},\varepsilon_1)>0$ so that 
\[
c \left(\frac{\delta^{\gamma}}{{N_1^{\gamma}|B_1|}}\right)^{\sigma}   < \varepsilon_1.
\]
Thus we derive  
\begin{equation}\label{as2}
w(\{ x\in B_1\,:\, \mathcal{M}(|D^2u|^{\gamma})(x)>N_1^{\gamma} \}) < \varepsilon_1 w(B_1).
\end{equation}

From Lemmas \ref{measure theory} and \ref{Mbdd in wO}, it follows that
\begin{align}\label{m1}
\int_{B_1} \Phi(|D^2u|) w\, dx  &= \int_{B_1} \Phi((|D^2u|^{\gamma})^{\frac{1}{\gamma}}) w \, dx \nonumber\\
& \le  \int_{B_1} \Phi(\mathcal{M}(|D^2u|^{\gamma})^{\frac1\gamma}) w \, dx\le c \left( w(B_1)+ S_0\right) 
\end{align}
for some constant $c = c(n, \mu, q, c_q, w, \Phi, i(\Phi))>0$,
where 
 \begin{align*}
S_0 &:=\sum_{k=1}^{\infty} \Phi(N_1^
 k) \omega( \{ x\in B_1\,:\, \mathcal{M}(|D^2u|^{\gamma})^{\frac1\gamma}(x)>N_1^ k\})\\
 & = \sum_{k=1}^{\infty} \Phi(N_1^
 k) \omega( \{ x\in B_1\,:\, \mathcal{M}(|D^2u|^{\gamma})(x)>N_1^{\gamma k}\}).
\end{align*} 
Thanks to \eqref{as2}, we use Lemma \ref{iteration} to yield that 
 \begin{align}\label{m2}
S_0 \leq 
&\sum_{k=1}^{\infty}\Phi(N_1^{
 k})\sum_{i=1}^k \varepsilon_2^i w(\{x\in B_1\,:\, \mathcal{M}(|f|^{\gamma})(x)>\delta^{\gamma}(N_1^{\gamma})^{k-i} \}) \nonumber \\
&\quad +\sum_{k=1}^{\infty}\Phi(N_1^{
 k}) \varepsilon_2^k w(\{x\in B_1\,:\, \mathcal{M}(|D^2u|^{\gamma})(x)>1 \})
\end{align} 
for some $N_1>1$ and $\varepsilon_2=[w]_{i(\Phi)}^2 10^{ni(\Phi)} \varepsilon_1$.
Note that $\Phi(N_1) \le \tau \Phi(1)$ for some constant $\tau=\tau(N_1)>1$ since $\Phi \in \Delta_2 \cap \nabla_2 $. 
We iterate this inequality to have $\Phi(N_1^{k}) \le \tau^i\Phi(N_1^{k-i})$ for $1\leq i\leq k$.
The last term in \eqref{m2} is then estimated as 
\begin{equation}\label{m3}
\sum_{k=1}^{\infty}\Phi(N_1^{
 k}) \varepsilon_2^k w(\{x\in B_1\,:\, \mathcal{M}(|D^2u|^{\gamma})(x)>1 \}) 
\leq \sum_{k=1}^{\infty} (\tau \varepsilon_2 )^{k}\Phi(1)  w(B_1) \leq c
\end{equation}
by taking $\epsilon_1 \in (0, 1)$ to be sufficiently small $\tau \varepsilon_2 < 1 $. 
Here, a constant $c>0$ depends on $n, w, \Phi, i(\Phi)$ and $[w]_{i(\Phi)}$.

Meanwhile, the first term in the right hand side of \eqref{m2} becomes 
\begin{align}\label{m4}
&\sum_{k=1}^{\infty}\Phi(N_1^{k}) 
\sum_{i=1}^k \varepsilon_2^i w(\{x\in B_1\,:\, \mathcal{M}(|f|^{\gamma})(x)>\delta^{\gamma}(N_1^{\gamma})^{k-i} \}) \nonumber\\
&\qquad = \sum_{i=1}^{\infty} (\tau \varepsilon_2)^i  \sum_{k=i}^{\infty} \Phi(N_1^{k-i}) w(\{x\in B_1\,:\, \mathcal{M}(|f|^{\gamma})(x)>\delta^{\gamma}(N_1^{\gamma})^{k-i} \}).
\end{align}
Letting $j=k-i$, we then derive that  
\begin{align}\label{m5}
&  \sum_{k=i}^{\infty} \Phi(N_1^{k-i}) w(\{x\in B_1\,:\, \mathcal{M}(|\delta^{-1}f|^{\gamma})(x)>(N_1^{\gamma})^{k-i} \})\nonumber \\ 
&\qquad = \Phi(1) w(\{x\in B_1\,:\, \mathcal{M} (|f|^{\gamma})(x)>\delta^{\gamma} \}) \nonumber \\
&\qquad \quad + \sum_{j=1}^{\infty}\Phi(N_1^{j}) w(\{x\in B_1\,:\, \mathcal{M} (|\delta^{-1}f|^{\gamma})^{\frac{1}{\gamma}}(x)>N_1^{j} \}) \nonumber \\
& \qquad \leq \Phi(1)w( B_1)  + c \int_{B_1} \Phi( \mathcal{M} (|\delta^{-1}f|^{\gamma})^{\frac1\gamma})w\,dx
\end{align}
from \eqref{estimate_S}.
Here, if we let $\Phi_{\gamma}(\rho) = \Phi(\rho^{\frac{1}{\gamma}})$, then $\Phi_{\gamma}$ is a $N$-function and  satisfies the $\Delta_2 \cap \nabla_2$-condition.   
Note that 
$$
i(\Phi_{\gamma}) = \frac{i(\Phi) }{\gamma } >\frac{ i(\Phi)-\tilde{\varepsilon}/2 }{ \min \{ q, \tilde{p}\}  } \ge\frac{ i(\Phi)-\tilde{\varepsilon}/2 }{ \tilde{p}  } =i(\Phi)-\tilde{\varepsilon}
$$ 
from the definition of $\tilde p$ in the proof of Lemma~\ref{Phi-integrability_f}.
Then the assumption that $w \in A_{i(\Phi)}$ implies that $w \in A_{i(\Phi)-\tilde{\varepsilon}} \subset A_{i(\Phi_{\gamma})}.$
Therefore
applying Lemma~\ref{Mbdd in wO} with $\Phi_{\gamma}$, we obtain that 
\begin{align}\label{m6}
& \int_{B_1} \Phi( \mathcal{M} (|\delta^{-1}f|^{\gamma})^{\frac1\gamma})w\,dx
= \int_{B_1} \Phi_{\gamma}( \mathcal{M} (|\delta^{-1}f|^{\gamma}))w\,dx \nonumber \\
&\qquad\le c \int_{B_1} \Phi_{\gamma}( |\delta^{-1}f|^{\gamma})w\,dx = c \int_{B_1} \Phi( |\delta^{-1}f|)w\,dx \le c
\end{align}
where we used \eqref{as1} together with \eqref{unitballproperty} in the last inequality.
Hence we insert the resulting estimates \eqref{m5}-\eqref{m6}  into \eqref{m4} in order to get
\begin{equation}\label{result_m4}
\sum_{k=1}^{\infty}\Phi(N_1^{k}) 
\sum_{i=1}^k \varepsilon_2^i w(\{x\in B_1\,:\, \mathcal{M}(|f|^{\gamma})(x)>\delta^{\gamma}(N_1^{\gamma})^{k-i} \}) \le  c \sum_{i=1}^{\infty} (\tau \varepsilon_2)^i  \le c 
\end{equation}
for some constant $c = c(n, \mu, q, c_q, w, \Phi, i(\Phi), [w]_{i(\Phi)})>0$.
Consequently, combining \eqref{m1}-\eqref{m3} and \eqref{result_m4},
we obtain the desired estimates \eqref{main_claim}.


\begin{thebibliography}{9}
	\bibitem{BBHV} M. Bramanti, L. Brandolini,  E. Harboure and B. Viviani,
	\textit{Global $W^{2,p}$ estimates for nondivergence elliptic operators with potentials satisfying a reverse H\"older condition}, Ann. Mat. Pura Appl. \textbf{191} (2012) 339--362.
	
	\bibitem{BL}
	S. Byun and M. Lee, \textit{On weighted $W^{2,p}$ estimates for elliptic equations with BMO coefficients in nondivergence form},
	Internat. J. Math. \textbf{26} (2015) 1550001, 28 pp.
	
	\bibitem{BL15}
	S. Byun and M. Lee, \textit{Weighted estimates for nondivergence parabolic equations in Orlicz spaces},
	J. Funct. Anal. \textbf{269} (2015) 2530--2563.
	
	\bibitem{BLO15} S. Byun, M. Lee and J. Ok, \textit{$W^{2, p(\cdot)}$-regularity for elliptic equations in nondivergence form with BMO coefficients}, 
	Math. Ann. \textbf{363} (2015) 1023--1052. 
	
	
\bibitem{BLO17}
	S. Byun, M. Lee and J. Ok, \textit{Weighted regularity estimates in Orlicz spaces for fully nonlinear elliptic equations},
Nonlinear Anal. \textbf{162} (2017) 178--196.
		
	
\bibitem {BOPS} S. Byun, J. Ok, D.K. Palagachev and L.G. Softova, \textit{Parabolic systems with measurable coefficients in weighted Orlicz spaces}, 
Commun. Contemp. Math. \textbf{18} (2016) 1550018, 19 pp.



\bibitem{C89} L.A. Caffarelli,
\textit{Interior a priori estimates for solutions of fully nonlinear equations},  Ann. of Math. \textbf{130} (1989) 189--213.




	\bibitem{CFL91} F. Chiarenza, M. Frasca and P. Longo, \textit{Interior $W^{2, p}$ estimates for nondivergence elliptic equations with discontinuous coefficients}, Ricerche Mat. \textbf{40} (1991) 149--168.
	
	\bibitem{CFL93} F. Chiarenza, M. Frasca and P. Longo,  \textit{$W^{2, p}$-solvability of the Dirichlet problem for nondivergence elliptic equations with VMO coefficients}, Trans. Amer. Math. Soc. \textbf{336} (1993) 841--853.
	
	\bibitem{CVV} I. Cardoso, P. Viola and B. Viviani, \textit{Interior $L^p$-estimates and local $A_p$- weights}, Rev. Un. Mat. Argentina \textbf{59} (2018) 73--98.
	
	
	\bibitem {CZ52} A. P. Calder\'on and A. Zygmund, \textit{On the existence of certain singular integrals}, Acta Math. \textbf{88} (1952) 85--139.
	
	\bibitem {D12} H. Dong, \textit{Solvability of second-order equations with hierarchically partially BMO coefficients}, 
	Trans. Amer. Math. Soc. 364 (2012) 493--517. 
	
	
\bibitem{FK97} A. Fiorenza, M. Krbec, \textit{Indices of Orlicz spaces and some applications}, Comment. Math. Univ. Carolin. \textbf{38} (1997) 433--451.
	
	\bibitem {Gr09} L. Grafakos, \textit{Modern Fourier analysis}, Graduate Texts in Mathematics 250, Springer, New York, 2009.


\bibitem{GP77} J. Gustavsson and J. Peetre,
\emph{Interpolation of Orlicz spaces}, Studia Math. \textbf{60} (1977) 33--59.

\bibitem{GT} D. Gilbarg and N.S. Trudinger, \textit{Elliptic partial differential equations of second order}, Classics in Mathematics, Springer-Verlag, Berlin, 2001.


\bibitem{HO15} P. H\"ast\"o and J. Ok, \textit{Calder\'on-Zygmund estimates in generalized Orlicz spaces}, J. Differential Equations \textbf{267} (2019) 2792--2823.


\bibitem{KK07} D. Kim and N.V. Krylov, 
 \textit{Elliptic differential equations with coefficients measurable with respect to one variable and VMO with respect to the others}, 
SIAM J. Math. Anal. 39 (2007) 489--506. 
	


\bibitem{KK91} V. Kokilashvili and M. Krbec, 
 \textit{Weighted inequalities in Lorentz and Orlicz spaces}, 
 World Scientific Publishing Co., Inc., River Edge, NJ, 1991.


\bibitem{KR61} M.A. Krasnosel\'{}ski\v{i} and Ja.B. Ruticki\v{i},
 \textit{Convex functions and Orlicz spaces}, 
 Translated from the first Russian edition by Leo F. Boron P. Noordhoff Ltd., Groningen, 1961.

\bibitem{KT82} R.A. Kerman and A. Torchinsky, \textit{Integral inequalities with weights for the Hardy maximal function}, Studia Math. \textbf{71} (1981) 277--284.

\bibitem{L15} X. Li, \textit{Interior $W^{2, p(x)}$-regularity for elliptic equations of nondivergence form}, J. Math. Anal. Appl. \textbf{435} (2016) 661--676.



\bibitem{O} W. Orlicz,
\textit{\"Uber eine gewisse Klasse von \"aumen vom Typus B}, Bull. Int. Acad. Polon. Sci. A 8/9 (1932) 207--220.


\bibitem{Sh95} Z. Shen, \textit{$L^p$ estimates for Schr\"odinger operators with certain potentials}, Ann. Inst. Fourier \textbf{45} (1995) 513--546.


\bibitem{S} E.M. Stein, \textit{Harmonic analysis: real-variable methods, orthogonality, and oscillatory integrals}, Princeton Mathematical Series, 43. Monographs in Harmonic Analysis, III. Princeton University Press, Princeton, NJ, 1993.

\bibitem{T} S. Thangavelu, \textit{Riesz transforms and the wave equation for the Hermite operator}, Comm. Partial Differential Equations \textbf{15} (1990) 1199--1215.



\bibitem{V} C. Vitanza, \textit{$W^{2, p}$-regularity for a class of elliptic second equations with discontinuous coefficients}, Matematiche \textbf{47} (1992) 177--186.


\bibitem{Vi93} C. Vitanza, \textit{A new contribution to the $W^{2,p}$ regularity for a class of elliptic second order equations with discontinuous coefficients}, Matematiche, \textbf{48} (1993) 287--296. 

\bibitem {W} L. Wang, \textit{A geometric approach to the Calder\'on-Zygmund estimates}, Acta Math. Sin. \textbf{19} (2003) 381--396.

\bibitem{WYZJ09} L. Wang, F. Yao, S. Zhou and H. Jia, \textit{Optimal regularity for the Poisson equation} Proc. Amer. Math. Soc. \textbf{137} (2009) 2037--2047. 

\bibitem{Ya14}
F. Yao, \textit{Weighted $L^p$ estimates for the elliptic Schr\"odinger operator}, Electron. J. Qual. Theory Differ. Equ. (2014) 13 pp.

\bibitem{Ya15}
	F. Yao, \textit{Global weighted $W^{2,p}$ estimates for nondivergence elliptic equations with small BMO coefficients},
	Internat. J. Math. \textbf{26} (2015) 1550089, 12 pp.
	
	\bibitem{Ya16}
	F. Yao, \textit{Global weighted estimates in Orlicz spaces for second-order nondivergence parabolic equations},
Nonlinear Anal. \textbf{141} (2016) 43--56. 

\bibitem{Z} J. Zhong, \textit{Harmonic analysis for some Schrodinger type operators}, Thesis (Ph.D.)–Princeton University, 1993, 84 pp. 
 
 
\end{thebibliography}
\end{document}